\documentclass[12pt]{amsart}

\usepackage[all]{xy}    
\usepackage{graphicx,fullpage}
\usepackage{amssymb}
\usepackage{mathrsfs}
\usepackage{multirow}
\usepackage{float}
\usepackage{tikz-cd}
\usepackage{adjustbox}
\usepackage{amsthm}
\usepackage{accents}
\usepackage{mathtools}
\usepackage{mathptmx,enumitem,cite,array}
\usepackage[new]{old-arrows}
\usepackage{tikz}
\usetikzlibrary{matrix,arrows}
\usepackage{amsmath}
\usepackage[numbers,sort&compress]{natbib} 
\usepackage[bookmarksnumbered, bookmarksopen,
colorlinks,citecolor=blue,linkcolor=blue,backref]{hyperref}
\usepackage[utf8]{inputenc}
\usepackage{chngcntr}
\usepackage{apptools}
\usepackage[toc,page]{appendix}

\AtAppendix{\counterwithin{theorem}{section}}

\newtheorem{theorem}{Theorem}[section]

\theoremstyle{example}
\newtheorem{remark}[theorem]{Remark}

\numberwithin{equation}{section}

\newcommand{\beq}{\begin{equation}}
\newcommand{\eeq}{\end{equation}}

\newcommand{\TT}{\mathbb{T}}
\newcommand{\ZZ}{\mathbb{Z}}

\newcommand{\CC}{\mathbb{C}}

\DeclareMathOperator{\Tr}{Tr}

\newcommand{\C}{\mathbb{C}}

\newcommand{\Z}{\mathbb{Z}}

\newcommand{\T}{\mathbb{T}}

\newcommand{\bH}{\mathbb{H}}

\newcommand{\gH}{\mathfrak{H}}
\newcommand{\gI}{\mathfrak{I}}

\newcommand{\gch}{\mathrm{GCh}_H}
\newcommand{\hh}{\mathcal{H}}
\newcommand{\acal}{\mathcal{A}}

\DeclareMathOperator{\ch}{ch}

\newcommand {\be}{\begin{equation}}
\newcommand {\ee}{\end{equation}}
\newcommand{\h}{\begin{eqnarray*}}
\newcommand{\e}{\end{eqnarray*}}


\begin{document}


\title
{T-Duality, Jacobi Forms and Witten Gerbe Modules}

\author{Fei Han}
\address{Department of Mathematics,
National University of Singapore, Singapore 119076}
\email{mathanf@nus.edu.sg}

 \author{Varghese Mathai}
\address{School of Mathematical Sciences,
University of Adelaide, Adelaide 5005, Australia}
\email{mathai.varghese@adelaide.edu.au}

\subjclass[2010]{Primary 58J26, 81T30, Secondary 11F50}
\keywords{T-duality, gerbe modules, graded Hori formula, Jacobi forms, Witten gerbe modules, graded twisted Chern character}
\date{}

\maketitle

\begin{abstract}
In this paper, we extend the T-duality Hori maps in \cite{BEM04a}, inducing isomorphisms of twisted cohomologies on T-dual circle bundles, to {\em graded Hori maps} and show that they induce isomorphisms of two-variable series of twisted cohomologies on the T-dual circle bundles, preserving Jacobi form properties. The composition of the graded Hori map with its dual is equal to  the Euler operator. We also construct  {\em Witten gerbe modules} arising from gerbe modules and show that their {\em graded twisted Chern characters} are Jacobi forms under an anomaly vanishing condition on gerbe modules, thereby giving interesting examples. 
\end{abstract}

\tableofcontents

\section*{Introduction}


Motivated by string theory, people have been attempting to generalize many concepts such as, vector bundles, Dirac operators, the
Atiyah-Singer index theory and so on to free loop spaces. Let $V$ be a rank $r$ complex vector bundle on a smooth manifold $M$ and $\widetilde V=V-\CC^r$ in the $K$-group of $M$.  In the theory of elliptic genera (\cite{HBJ, Hop, LS88, Liu95cmp, Liu95, O87, W86, W87, W4}), one considers the {\bf Witten bundles} $\mathcal{V}$ and $\mathcal{V}'$, elements in $K(M)[[q^{1/2}]]$, as follows (in the classical theory of elliptic genera, there are actually four such Witten bundles. For simplicity, we only discuss two of them here in the introduction. See (\ref{operations}) for explanation of the notions):
\be \label{Witten bundle}\mathcal{V}:=\bigotimes_{j=1}^{\infty}\Lambda_{-q^{j-{1/2}}}(\widetilde V)\otimes \bigotimes_{j=1}^{\infty}\Lambda_{-q^{j-{1/2}}}(\widetilde {\bar V}),\
\ \
\mathcal{V}':=\bigotimes_{j=1}^{\infty}\Lambda_{q^{j-{1/2}}}(\widetilde V)\otimes \bigotimes_{j=1}^{\infty}\Lambda_{q^{j-{1/2}}}(\widetilde {\bar V}).\ee
They are formally viewed as vector bundles over the free loop space $LM$. As Witten remarked in his lecture notes \cite{W4}, physically, this analogue is important because they arise in heterotic
string theory. 

Let $\{2\pi \sqrt{-1} x_i\},\, 1\leq i\leq r,$ be the formal Chern roots of $V$ and $q=e^{2\pi  \sqrt{-1}\tau}$, $\tau \in \bH$, the upper half plane. In terms of Jacobi theta functions (see Appendix), the Chern characters of $\mathcal{V}$ and $\mathcal{V}'$ can be expressed as
\be \ch (\mathcal{V})=\prod_{i=1}^r \frac{\theta_2(x_i, \tau)}{\theta_2(0, \tau)}\in H^{even}(M)[[q^{1/2}]], \ \  \ch (\mathcal{V'})=\prod_{i=1}^r \frac{\theta_3(x_i, \tau)}{\theta_3(0, \tau)}\in H^{even}(M)[[q^{1/2}]].\ee
Equip $V$ with a connection $\nabla^V$, using the Chern-Weil theory, the Chern characters $\ch \mathcal{V}, \ch \mathcal{V'}$ can be represented by holomorphic functions on $\bH$, taking values in the even degree closed differential forms on $M$. 

Suppose one has 
\be \frac{1}{2}p_1(V)=c_1(V)^2-2c_2(V)=\ch^{[4]}(V)=0,\ee where $p_1(V), c_1(V), c_2(V)$ and $\ch(V)$ stand for the first Pontryajin class, the first and second Chern class and the Chern character respectively, then the degree $p$ (with $p$ even) components
$\ch^{[p]} (\mathcal{V})$ and  $\ch^{[p]} (\mathcal{V'})$ are modular forms of weight $\frac{p}{2}$ over $\Gamma_0(2)$ and $\Gamma_\theta(2)$ respectively (see appendix for the meaning of notations and c.f. \cite{CH} for details).

Let $(E, E')$ be a gerbe module pair over manifold $M$ with flux $H$ (see Section \ref{Witten module} for the brief introduction). In this paper, we want to understand  the {\bf Witten gerbe modules} arising from $(E, E')$, an infinite dimensional analogue of the Witten bundles $\mathcal{V}, \mathcal{V'}$ constructed from a vector bundle $V$ as in (\ref{Witten bundle}). Such  Witten gerbe modules can be formally viewed as ``gerbe modules over loop space". We give the constructions of the Witten gerbe modules in Section \ref{Witten module}. 

When expanding $\mathcal{V}$ and $\mathcal{V'}$, one gets $q$-series with coefficients being virtual vector bundles manufactured out from $V$. If one expands the Witten gerbe modules, the coefficients in the $q$-series are virtual gerbe modules manufactured out from the gerbe module pair $(E, E')$. Since the construction involves exterior powers of $E$ and $E'$, these virtual gerbe modules are not only with twist $H$, but a mixed sum of modules with twists $mH, \, m\in \Z$ of various levels. Hence to take the Chern character of the Witten gerbe modules, we have to take into account of the levels of twists and apply the {\em twisted Chern character} (\cite{BCMMS}) of twist $mH$ when the module has twist $mH$ and mix them. This motivates us to introduce the {\bf graded twisted Chern character}, which is first constructed in \cite{HM19}. Unlike $\mathcal{V}$ and $\mathcal{V'}$ arising from vector bundle of finite rank, the gerbe modules $E, E'$ are infinite dimensional, and therefore there are some analytic difficulties to overcome for the convergence of the graded twisted Chern characters. We use the {\bf Holomorphic functional calculus} and {\bf Fredholm determinant} to deal with this. See Section \ref{Witten module} for details. 

The target space of the graded twisted Chern character are $q$-series with coefficients being differential forms on $M$, who are sums of $(d+mH)$-closed  differential forms for various $m$. To distinguish these $(d+mH)$-closed forms for various level $m$, we introduce a formal variable $y$ such that if a form $\omega$ is $(d+mH)$-closed, we write $\omega \cdot y^m$. Then the graded  twisted Character of the Witten gerbe modules take values in $\Omega^*(M)[[y, y^{-1}, q]]$ or $\Omega^*(M)[[y, y^{-1}, q^{1/2}]]. $ One can formally view them as spaces of twisted differential forms on free double loop space, and a model for the configuration space for Ramond-Ramond fields on this space. See some other loop space perspectives of T-duality in \cite{HM15, HM18, LM15}. 

We find that, analogous to that $\ch(\mathcal{V})$ and  $\ch (\mathcal{V'})$ have modularity under the anomaly vanishing condition $\ch^{[4]}(V)=0$, the graded twisted Chern character of the Witten gerbe modules are {\bf Jacobi forms} (see definitions in Section \ref{Jacobi}) over certain subgroups in $SL(2, \Z)$ under the anomaly vanishing conditions $Ch_H^{[2]}(E,E')=0$, $Ch_H^{[4]}(E, E')=0$. 

T-duality is an equivalence between two a priori distinct Type II string compactifications and backgrounds which nevertheless are indistinguishable from a physical point of view. These backgrounds can have different geometries, different fluxes, and strikingly can even be topologically distinct manifolds as first shown in \cite{BEM04a,BEM04b}. Let $Z, \widehat Z$ be circle bundles over $X$ and $H, \widehat H$ be H-fluxes on $Z, \widehat Z$ respectively such that $(Z,H)$ and 
$(\widehat Z, \widehat H)$ are T-dual pairs, see Section \ref{Tduality-review}. Then a key result proved is that the Hori map, 
$T: \Omega^{\overline{k}}(Z)^{\T} \to \Omega^{\overline{k+1}}(\hat{Z})^{\hat{\T}}$, where $k\ge 0$ and $\bar k = k\mod 2$, 
is a chain map which is an isometry upon choosing a Riemannian metric on $M$ and connections on the circle bundles $Z, \widehat Z$. This induces an isomorphism of twisted cohomology groups, $T: H^{\overline{k}}(Z, H) \to H^{\overline{k+1}}(\widehat Z, \widehat H)$. 

In view of the the above discussion about the target space of the graded twisted Chern characters and the Hori map in the T-duality, we generalise a central result  in \cite{BEM04a} as follows, referring to Section \ref{Tduality-review} and Section \ref{Jacobi} for more details. Let $\acal^{\bar k}(Z)^{\T}_{(d+mH)-cl}$ denote the space of holomorphic functions on $\bH$ except for a set of isolated points, which take values in $\Omega^{\bar k}(Z)^\T_{(d+mH)-cl}$, the $\T$-invariant $(d+mH)$-closed complex-valued differential forms on $Z$ with degree of parity $\bar k$.  Let $\hh^{\bar k}(Z, mH)$ denote the space of holomorphic functions on $\bH$ except for a set of isolated points, which take values in the twisted cohomology $H^{\bar k}(Z, mH)$ with degree of parity $\bar k$. In Section \ref{Jacobi}, we introduce the {\bf graded Hori map}, which is a chain map and prove that it is an isomorphism,
\be LT_*: \bigoplus_{m\in \Z, m\neq 0}\acal^{\bar k}(Z)^{\T}_{(d+mH)-cl}\cdot y^m\to \bigoplus_{m\in \Z, m\neq 0}\acal^{\overline{k+1}}(\hat Z)^{\hat \T}_{(d+m\hat H)-cl}\cdot y^m. \ee
Passing to cohomology, we get the induced isomorphism
\be LT: \bigoplus_{m\in \Z, m\neq 0} \hh^{\bar k}(Z, mH)\cdot y^m \to \bigoplus_{m\in \Z, m\neq 0} \hh^{\overline{k+1}}(\hat Z, m\hat H)\cdot y^m. \ee
One can similarly define the graded Hori map $\widehat{LT}_*$ and $ \widehat{LT}$ on the dual side.
We prove that if the graded Hori maps are applied twice,  
\be \widehat{LT}\circ LT, \ \ \ \  LT\circ  \widehat{LT}\ee
are equal to the Euler operator $-y\frac{\partial}{\partial y}$ in the variable $y$. It is easy to see that when restricted to the level $m=1$ component, the Euler operator $-y\frac{\partial}{\partial y}=-\mathrm{Id}$. So restricting ourself to level $m=1$ and a single point in $\bH$, we recover the results in \cite{BEM04a}, see Remark \ref{repack}. We will also prove that the 
graded Hori map sends Jacobi form elements to Jacobi form elements. We summarize these results in Theorem \ref{main}. 

The paper is organized as follows. In Section \ref{Tduality-review}, we give a brief review of T-duality. In Section \ref{Jacobi}, we introduce the graded Hori map and study the $T$-duality about it. In Section \ref{Witten module}, we construct the Witten gerbe modules arising from a gerbe module pair $(E, E')$ and study the graded twisted Chern character of them as well as the effect of the graded Hori map on them. We also study the odd analogue of Witten gerbe modules. Some basics about Jacobi theta functions used in the paper are provided in the Appendix.   

The results of this paper are for non-interacting strings. We plan to study the interacting case \cite{Wi86} in the near future, and are grateful to Chris Hull for pointing this out to V.M..

\bigskip

\subsection{Acknowledgements} Fei Han was partially supported by the grant AcRF R-146-000-218-112 from National University of Singapore. Varghese Mathai was 
supported by funding from the Australian Research Council, through the Australian Laureate Fellowship FL170100020. 
V.M. gave talks on this paper at the conferences, {\em Geometry of Quantum Fields and Strings}, University of Auckland,  January 10-12, 2020, 
and {\em M-theory and Mathematics}, NYU Abu Dhabi Institute, January 27-30, 2020. He thanks the conference participants for feedback.

\section{Review of T-duality}\label{Tduality-review}
Let $Z$ be a smooth manifold endowed with an $H$-flux which is a presentative in the degree 3 Deligne cohomology of $Z$, that is $H\in \Omega^3(Z)$ with integral periods (for simplicity, we drop the factor of $\frac{1}{2\pi \sqrt{-1}}$. 

Here we briefly review topological T-duality arising for the case of principal circle bundles with a H-flux. For such a case, one begins with a principal circle bundle $\pi: Z \to X$ whose first Chern class is given by $[F] \in H^2(X, \mathbb{Z})$, along with a H-flux given by some $[H] \in H^3(Z, \mathbb{Z})$. The aim is to then determine the corresponding data arising after an application of the T-duality transformation. 

One method for determining the T-dual data is by focusing on the Gysin sequence associated to the bundle $\pi : Z \to X$, given by:
\begin{center}
\begin{tikzcd}
\cdots \arrow[r] & H^{3}(X, \mathbb{Z}) \arrow[r, "\pi^{*}"] & H^3(Z, \mathbb{Z}) \arrow[r, "\pi_{*}"] & H^{2}(X, \mathbb{Z}) \arrow[r, "\lbrack F \rbrack \wedge"] & H^{4}(X, \mathbb{Z})  \arrow[r] & \cdots
\end{tikzcd}
\end{center}
Letting $[H] \in H^3(Z, \mathbb{Z})$, define $[\hat{F}] = \pi_*([H]) \in H^2(X, \mathbb{Z})$, and make the choice of some principal circle bundle $\hat{\pi} : \hat{Z} \to X$ with first Chern class $[\hat{F}]$. Having made such a choice, we then consider the Gysin sequence associated to the bundle $ \hat{Z}$ over $ X$,
\begin{center}
\begin{tikzcd}
\cdots \arrow[r] & H^{3}(X, \mathbb{Z}) \arrow[r, "\hat{\pi}^{*}"] & H^3(\hat{Z}, \mathbb{Z}) \arrow[r, "\hat{\pi}_{*}"] & H^{2}(X, \mathbb{Z}) \arrow[r, "\lbrack \hat{F}\rbrack \wedge"] & H^{4}(X, \mathbb{Z})  \arrow[r] & \cdots
\end{tikzcd}
\end{center}
Now using exactness, and the fact that $[F] \wedge [\hat{F}]= [F] \wedge \pi_*([H])=0$, there exists an $[\hat{H}] \in H^3(\hat{Z})$ such that $[F] = \hat{\pi}_*([\hat{H}])$. The following theorem gives a a global, geometric version of the Buscher rules \cite{Buscher}.

\begin{theorem}[\cite{BEM04a}]\label{TD1}
Let $\pi : Z \to X $ denote a principal circle bundle whose first Chern class is given by $[F] \in H^2(X, \mathbb{Z})$, and let $[H] \in H^3(Z)$ denote a H-flux on $Z$. 

Then there exists a T-dual bundle $\hat{\pi} : \hat{Z} \to X$ whose first Chern class is denoted $[\hat{F}] \in H^2(X, \mathbb{Z})$ and a T-dual H-flux on this bundle given by $[\hat{H}] \in H^3(\hat{Z}, \mathbb{Z})$, satisfying
\begin{align*}
[\hat{F}] &= \pi_*([H]),\\
[F] &= \hat{\pi}_*([\hat{H}]).
\end{align*}

Furthermore, letting $Z \times_X \hat{Z} = \{(a, b) \in Z \times \hat{Z} | \pi(a) = \hat{\pi}(b)\}$ and considering the following commutative diagram of bundle maps 
\begin{center}
\begin{tikzcd}
 &Z \times_X \hat{Z} \arrow[dl, "p"'] \arrow[rd, "\hat{p}"] & \\
Z\arrow[rd, "\pi"] & & \hat{Z}, \arrow[ld, "\hat{\pi}"'] \\
 & X & 
\end{tikzcd}
\end{center}
then if the two H-fluxes, $[H]$ and $[\hat{H}]$, satisfy
\begin{align*}
p^*([H]) = \hat{p}^*([\hat{H}]),
\end{align*}
the T-dual pair is unique up to bundle automorphism, and thus defines the T-duality transformation.
\end{theorem}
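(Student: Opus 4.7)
The strategy is direct: construct the T-dual data via the Gysin sequence and then use the correspondence space to enforce uniqueness. First I would set $[\hat{F}] := \pi_*[H]$ and, using that isomorphism classes of principal circle bundles over $X$ are in bijection with $H^2(X, \mathbb{Z})$, pick any principal circle bundle $\hat{\pi} : \hat{Z} \to X$ whose first Chern class is $[\hat{F}]$. This immediately gives the first displayed identity $[\hat{F}] = \pi_*[H]$.

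Next, to produce $[\hat{H}] \in H^3(\hat{Z}, \mathbb{Z})$ with $\hat{\pi}_*[\hat{H}] = [F]$, I would apply the displayed Gysin sequence for $\hat{\pi}$: by exactness at $H^2(X, \mathbb{Z})$ existence reduces to the vanishing of $[F] \wedge [\hat{F}]$ in $H^4(X, \mathbb{Z})$. The projection formula gives
\[ [F] \wedge [\hat{F}] \;=\; [F] \wedge \pi_*[H] \;=\; \pi_*\bigl(\pi^*[F] \wedge [H]\bigr), \]
and the Gysin sequence for $\pi : Z \to X$ read at $H^2$-level shows $\pi^*[F] = 0$ (the Euler class of a circle bundle pulled back to its own total space vanishes, as it is the image of $1 \in H^0(X)$ under the cup-with-$[F]$ map, whose composition with $\pi^*$ is zero by exactness). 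Hence $[F] \wedge [\hat{F}] = 0$ and a lift $[\hat{H}]$ exists.

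For uniqueness, exactness of the same Gysin sequence implies that any two such lifts differ by an element of $\hat{\pi}^* H^3(X, \mathbb{Z})$, so any two candidates $[\hat{H}]$ and $[\hat{H}]'$ satisfy $[\hat{H}]' - [\hat{H}] = \hat{\pi}^*[\beta]$ for some $[\beta] \in H^3(X, \mathbb{Z})$. To cut this ambiguity down to bundle automorphisms I would pass to the correspondence space $W := Z \times_X \hat{Z}$, a principal $\mathbb{T}^2$-bundle over $X$ on which both $p : W \to Z$ and $\hat{p} : W \to \hat{Z}$ are principal circle bundles (pullbacks of $\hat{\pi}$ and $\pi$ respectively). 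Imposing the compatibility $p^*[H] = \hat{p}^*[\hat{H}]$ forces $\hat{p}^*\hat{\pi}^*[\beta] = 0$ for the ambiguity $[\beta]$. Since $\hat{p}^*\hat{\pi}^* = (\hat{\pi}\circ\hat{p})^*$ factors through the projection $W \to X$, a Leray--Hirsch analysis of this $\mathbb{T}^2$-fibration identifies the residual freedom in $[\beta]$ with the gauge-theoretic action of principal bundle automorphisms of $\hat{\pi} : \hat{Z} \to X$ on the set of H-fluxes, which is exactly the statement of uniqueness up to bundle automorphism.

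The existence half is a clean exercise with the projection formula and the Gysin sequence, and I expect no real difficulty there. The main obstacle is the uniqueness clause: one has to match the cohomological ambiguity $\hat{\pi}^* H^3(X, \mathbb{Z})$ in the choice of $[\hat{H}]$ with the action of the bundle automorphism group of $\hat{Z}$ on the pair $(\hat{Z}, [\hat{H}])$, and to show that the correspondence-space condition absorbs exactly the non-trivial part of that ambiguity. The cleanest way to keep track of the bookkeeping is, I think, to run the two Serre spectral sequences for $p$ and $\hat{p}$ converging to $H^*(W)$ in parallel and compare their $E_2$-terms along the Cartesian square, which is precisely where the T-duality transformation becomes the natural self-dual object.
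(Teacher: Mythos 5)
Your existence argument via the Gysin sequence --- setting $[\hat F]=\pi_*[H]$ and checking $[F]\wedge[\hat F]=\pi_*(\pi^*[F]\wedge[H])=0$ by the projection formula and exactness --- is exactly the route the paper sketches in Section \ref{Tduality-review} before stating the theorem, which it otherwise cites from \cite{BEM04a} without further proof. Your uniqueness clause is only an outline (the Leray--Hirsch/spectral-sequence bookkeeping that matches the ambiguity $\hat\pi^*H^3(X,\mathbb{Z})$ against the action of bundle automorphisms is not actually carried out), but the paper does not prove uniqueness either, so the proposal is consistent with, and no less complete than, the paper's treatment.
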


\begin{theorem}[\cite{BEM04a}]\label{TD2}
Let $A$, $\hat{A}$ denote connection forms on $Z$ and $\hat{Z}$ respectively, choose an invariant representative $H \in [H]$ and $\hat{H} \in [\hat{H}]$, and let $\big(\Omega^{*}(Z)^{S^1}, d + H\big)$ denote the $H$-twisted, $\mathbb{Z}_2$-graded differential complex of invariant differential forms.

Then the following Hori map:
\begin{align*}
T: (\Omega^{*}(Z)^{S^1}, d+H) &\to (\Omega^{*+1}(\hat{Z})^{\hat{S}^1}, -(d+{\hat{H})})\\
\omega \quad &\mapsto \int_{S^1}\omega \wedge e^{- \hat{A} \wedge A},
\end{align*}
is a chain map isomorphism between the twisted, $\mathbb{Z}_2$-graded complexes. Furthermore, it induces an isomorphism on the twisted cohomology:
\begin{align*}
T: H^{*}_{d+H}(Z) &\to H^{*+1}_{d+\hat{H}}(\hat{Z}).
\end{align*}
\end{theorem}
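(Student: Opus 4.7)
The plan is to transfer everything to the correspondence space $Z \times_X \hat Z$, where both connection forms $A$ and $\hat A$ live naturally as genuine 1-forms, and reduce the theorem to a single geometric identity combined with Stokes' theorem for fiber integration. Throughout I would write the Hori map in the equivalent form
\[
T\omega \;=\; \hat p_*\bigl(p^*\omega \wedge e^{-\hat A \wedge A}\bigr),
\]
where $\hat p_*$ is integration along the $\T$-fiber of $\hat p: Z\times_X\hat Z \to \hat Z$.

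\textbf{Step 1: Key geometric identity.} I first choose invariant representatives for the flux classes using the connection 1-forms, writing $H = h_3 + \hat F \wedge A$ on $Z$ and $\hat H = h_3 + F \wedge \hat A$ on $\hat Z$ for a common basic 3-form $h_3 \in \Omega^3(X)$; this is possible by the cohomological condition $p^*[H] = \hat p^*[\hat H]$ provided in Theorem \ref{TD1}. A direct calculation on the correspondence space, using $dA = F$ and $d\hat A = \hat F$ lifted horizontally from $X$, then yields the central form-level identity
\[
p^*H - \hat p^*\hat H \;=\; d(\hat A \wedge A).
\]

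\textbf{Step 2: Chain map property.} Decomposing an invariant form as $\omega = \omega_0 + A \wedge \omega_1$ with $\omega_0,\omega_1$ pulled from $X$, and exploiting $(\hat A \wedge A)^2 = 0$ so that $e^{-\hat A \wedge A} = 1 - \hat A \wedge A$, one obtains the explicit formula $T\omega = \omega_1 - \hat A \wedge \omega_0$. Expanding $(d+H)\omega$ and $(d+\hat H)T\omega$ in the same $X \oplus A\cdot X$ decomposition, each occurrence of $\hat F \wedge \omega_0$ or $F\wedge\omega_1$ generated by $H$ or $\hat H$ is converted via the identity of Step 1 into a $d\hat A$- or $dA$-term. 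The Koszul signs in the twist terms $H\wedge\omega$ and $\hat H\wedge T\omega$ conspire so that every summand in $T((d+H)\omega)$ is the negative of the corresponding summand in $(d+\hat H)T\omega$, yielding $T((d+H)\omega) + (d+\hat H)T\omega = 0$, which is the desired chain map identity.

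\textbf{Step 3: Inverse and descent.} For the inverse I would define the dual Hori map $\hat T \hat\omega = p_*(\hat p^*\hat\omega \wedge e^{\hat A \wedge A})$, with the opposite sign in the exponent. The composition $\hat T \circ T$ factors through $Z\times_X \hat Z$, whose fiber over $X$ is the torus $\T \times \hat\T$; the two exponentials collapse via $e^{\hat A\wedge A}\wedge e^{-\hat A\wedge A} = 1$ (again by nilpotence), and a Fubini-type computation for iterated fiber integration over $\T$ and $\hat\T$ reduces $\hat T \circ T$ to the identity on $\Omega^*(Z)^\T$ (up to a sign absorbed into the convention). The analogous computation handles $T \circ \hat T$, so $T$ is a chain isomorphism at the level of invariant complexes and therefore descends to an isomorphism on twisted cohomology; the $-$ sign on the target differential does not alter the cohomology groups.

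\textbf{Main obstacle.} All the genuinely geometric content of the theorem is concentrated in the identity $p^*H - \hat p^*\hat H = d(\hat A \wedge A)$; everything else is formal manipulation. The most delicate part of the argument is the sign bookkeeping, which requires combining the Koszul signs arising from the odd-degree flux forms, the sign in Stokes' theorem for fiber integration $d\hat p_* = (-1)^n \hat p_* d$, and the orientation convention on $\int_\T$ in a consistent way — precisely this sign accounting is what forces the ``$-$'' in front of $(d+\hat H)$ on the target complex.
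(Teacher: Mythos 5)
Your argument is correct and is essentially the same approach the paper itself displays when it generalizes this theorem in the proof of Theorem \ref{main}(i) (the statement above is quoted from \cite{BEM04a} without proof): write an invariant form as $\omega=\omega_0+A\wedge\omega_1$ with basic components, use $\hat p^*\hat H-p^*H=d(A\wedge\hat A)$ for the chain-map identity, and verify invertibility by computing $\hat T\circ T$ through explicit fiber integration. The one imprecision is the Step 3 remark that the exponentials ``collapse'': $\hat T\circ T$ is not $p_*\bigl(p^*\omega\wedge e^{\hat A\wedge A}\wedge e^{-\hat A\wedge A}\bigr)$, since the intermediate pushforward to $\hat Z$ (which averages away the $A$-component) intervenes between the two kernels; the honest verification is the iterated fiber integration you also describe, which applied to $T\omega=\pm(\omega_1-\hat A\wedge\omega_0)$ returns $-\omega$, i.e.\ the $m=1$ case of the paper's identity $\hat T_{*,m}\circ T_{*,m}=-m$.
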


\section{Jacobi forms and graded Hori maps}\label{Jacobi}
Let $\Gamma$ be a subgroup of $SL(2, \Z)$ of finite index. Let $L$ be an integral lattice in $\C$ preserved by $\Gamma$. Denote $\bH$ the upper half plane. A {\bf (meromorphic) Jacobi form} (c.f. \cite{EZ, Liu95}) of weight $s$ and index $l$ over $L\rtimes \Gamma$ is a (meromorphic) function $J(z, \tau)$ on $\C \times \bH$ such that \newline
(i) $J\left(\frac{z}{c\tau+d}, \frac{a\tau+b}{c\tau+d}\right)=(c\tau+d)^se^{2\pi \sqrt{-1} l(cz^2/(c\tau+d))}J(z,\tau)$;\newline
(ii) $J(z+\lambda \tau+\mu, \tau)=e^{-2\pi \sqrt{-1}l(\lambda^2\tau+2\lambda z))}J(a, \tau),$
where 
$$(\lambda, \mu)\in L, \ \left(\begin{array}{cc} a&b\\ c&d\end{array}\right)\in \Gamma. $$

In this paper, we will use a slight extension of the above definition of Jacobi forms, namely, (i) we will allow $J(z, \tau)$ to take values in the differential forms on a manifold $M$; (ii) as $J(z, \tau)$ takes values in differential forms, we don't require the singular points be poles but only remain undefined. 

Let $M$ be a manifold with $H$-flux. Let $\acal^{\bar k}(M)_{(d+mH)-cl}$ denote the space of holomorphic functions on $\bH$ except for a set of isolated points, which take values in $\Omega^{\bar k}(M)_{(d+mH)-cl}$, the $(d+mH)$-closed forms on $M$ with degree parity $\bar k$.  Let $\hh^{\bar k}(M, mH)$ denote the space of holomorphic functions on $\bH$ except for a set of isolated points, which take values in $H^{\bar k}(M, mH)$.

Denote $q=e^{2\pi \sqrt{-1} \tau}, \tau\in \mathbb{H}$ and $y=e^{-2\pi \sqrt{-1} z}, \, z\in \C$. On $M$, consider the 2-variable series  
$$\omega(z, \tau)\in \bigoplus_{m\in \ZZ} \hh^{\bar k}(M, mH)\cdot y^m$$ with the following properties: $\omega(z, \tau)$ is represented by 
\be   \sum_{m\in \ZZ}\omega_m(\tau)y^m,\ee
with $\omega_m(\tau)\in \acal^{\bar k}(M)_{(d+mH)-cl}, m\in \ZZ$ such that the degree $p$ (with $\bar p=\bar k)$ component 
\be  \sum_{m\in \ZZ}\omega_m(\tau)^{[p]}y^m\ee
is the expansion at $y=0$ of a Jacobi form of weight $\frac{p+\bar k}{2}$ and index 0 over $L\rtimes \Gamma$. Denote the abelian group of all such $\omega(z, \tau)$  by $\mathcal{J}_0^{\bar k}(M, H; L, \Gamma)$.

Now consider the situation of T-duality with pair $(Z, H), (\hat Z, \hat H)$ as in Section \ref{Tduality-review}. For $m\in \Z$, define the {\bf level $m$ Hori map} by
\be T_{*, m}(G)= \int_\T e^{-mA\wedge \hat A } G,\ee
for $G$ is an $\T$-invariant form on $Z$ and $(d+mH)G=0$. As we have 
\be m\hat H=mH+d(mA\wedge \hat A),\ee it is not hard to see that $T_{*, m}G$ is a $\hat \T$-invariant form on $\hat Z$ and $$(d+m\hat H)(T_{*, m}(G))=0,$$ similar to the $m=1$ case.

Denote $\acal^{\bar k}(Z)^{\T}_{(d+mH)-cl}$ the space of holomorphic functions on $\bH$ except for a set of isolated points, which take values in $\Omega^{\bar k}(Z)^\T_{(d+mH)-cl}$, the $\T$-invariant $(d+mH)$-closed forms on $Z$ with degree parity $\bar k$. Denote $\acal^{\bar k}(\hat Z)^{\hat \T}_{(d+m\hat H)-cl}$ the similar stuff on the dual side. 
Define the {\bf graded Hori map} 
\be LT_*: \bigoplus_{m\in \Z}\acal^{\bar k}(Z)^{\T}_{(d+mH)-cl}\cdot y^m\to \bigoplus_{m\in \Z}\acal^{\overline{k+1}}(\hat Z)^{\hat \T}_{(d+m\hat H)-cl}\cdot y^m \ee
by
\be  LT_*\left(\sum_{m\in \Z}\omega_m(\tau)y^m\right)=\sum_{m\in \Z} T_{*, m}(\omega_m(\tau))y^m,\ee
for 
$$\sum_{m\in \Z}\omega_m(\tau)y^m \in \bigoplus_{m\in \Z}\acal^{\bar k}(Z)^{\T}_{(d+mH)-cl}\cdot y^m.$$ 
Passing to cohomology, we have the graded Hori map
\be LT: \bigoplus_{m\in \Z} \hh^{\bar k}(Z, mH)\cdot y^m \to \bigoplus_{m\in \Z} \hh^{\overline{k+1}}(\hat Z, m\hat H)\cdot y^m. \ee

One can  similarly define on the dual side,
\be \widehat{LT}_*: \bigoplus_{m\in \Z}\acal^{\bar k}(\hat Z)^{\hat \T}_{(d+m\hat H)-cl}\cdot y^m\to \bigoplus_{m\in \Z}\acal^{\overline{k+1}}(Z)^{\T}_{(d+mH)-cl}\cdot y^m \ee
and 
\be \widehat{LT}: \bigoplus_{m\in \Z} \hh^{\bar k}(\hat Z, mH)\cdot y^m \to \bigoplus_{m\in \Z} \hh^{\overline{k+1}}(Z, mH)\cdot y^m. \ee

\begin{remark} $Z$ and $\hat Z$ are circle bundles. As treated in \cite{BEM04a}, one considers $\T$-invariant forms on $Z$ and $\hat T$-invariant forms on $\hat Z$. However, on the level of cohomology, invariant twisted cohomology is same as the usual twisted cohomology.  

\end{remark}

\begin{theorem}\label{main} Let $H(\bH)$ denote the space of holomorphic functions on $\bH$.  The following identities hold:\newline
(i) \be \label{compo} \widehat{LT}\circ LT=-y\frac{\partial}{\partial y}=-\frac{\sqrt{-1}}{2\pi }\frac{\partial}{\partial z}, \ \ LT\circ \widehat{LT}=-y\frac{\partial}{\partial y}=-\frac{\sqrt{-1}}{2\pi }\frac{\partial}{\partial z};\ee
in particular when restricting to $m\neq 0$ parts, we get isomorphisms of $H(\bH)$ modules,
\be LT_*: \bigoplus_{m\in \Z, m\neq 0}\acal^{\bar k}(Z)^{\T}_{(d+mH)-cl}\cdot y^m\to \bigoplus_{m\in \Z, m\neq 0}\acal^{\overline{k+1}}(\hat Z)^{\hat \T}_{(d+m\hat H)-cl}\cdot y^m; \ee
\be \widehat{LT}_*: \bigoplus_{m\in \Z, m\neq 0}\acal^{\bar k}(\hat Z)^{\hat \T}_{(d+m\hat H)-cl}\cdot y^m\to \bigoplus_{m\in \Z, m\neq 0}\acal^{\overline{k+1}}(Z)^{\T}_{(d+mH)-cl}\cdot y^m.  \ee
(ii) Restrcting to the Jacobi forms, we have 
\be LT\left(\mathcal{J}_0^{\bar k}(Z, H; L, \Gamma))\right)\subseteq \mathcal{J}_0^{\overline {k+1}}(\hat Z, \hat H; L, \Gamma) \ee and therefore 
get a map of abelian groups,
\be LT: \mathcal{J}_0^{\bar k}(Z, H; L, \Gamma))\to \mathcal{J}_0^{\overline {k+1}}(\hat Z, \hat H; L, \Gamma);\ee 
dually, we have
\be \widehat{LT}\left(\mathcal{J}^{\bar k}_0(\hat Z, \hat H; L, \Gamma)\right)\subseteq\mathcal{J}_0^{\overline{k+1}}(Z, H; L, \Gamma)\ee
and therefore 
get a map of abelian groups,
\be \widehat{LT}\colon \mathcal{J}^{\bar k}_0(\hat Z, \hat H; L, \Gamma)\to\mathcal{J}_0^{\overline{k+1}}(Z, H; L, \Gamma).\ee

\end{theorem}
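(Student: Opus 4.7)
My plan is to prove (i) by establishing the level-$m$ chain map property, computing $\hat T_{*,m}\circ T_{*,m}$ directly via an explicit decomposition of $\T$-invariant forms, and then assembling with $y^m$; part (ii) follows from the $(\tau, z)$-independence of the Hori operations together with the standard transformation behaviour of $\partial_z$ on index-$0$ Jacobi forms.

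For the chain map property, the identity $m\hat H = mH + d(mA\wedge \hat A)$ reduces the general-$m$ case to the known $m=1$ case of Theorem \ref{TD2} by the same argument with $H, \hat H$ replaced by $mH, m\hat H$. For the composition in (i), I would decompose any $\T$-invariant form on $Z$ as $\omega = \pi^*\beta + A\wedge \pi^*\gamma$ with $\beta, \gamma \in \Omega^*(X)$. Using $(A\wedge \hat A)^2 = 0$, so that $e^{-mA\wedge \hat A} = 1 - mA\wedge \hat A$, together with the normalisation $\int_\T A = 1$, a direct fibre-integration gives
\begin{equation*}
T_{*,m}(\omega) \;=\; \hat\pi^*\gamma \;-\; m\,\hat A\wedge \hat\pi^*\beta.
\end{equation*}
Rewriting this on $\hat Z$ as $\hat\pi^*\hat\beta + \hat A\wedge \hat\pi^*\hat\gamma$ with $\hat\beta = \gamma$ and $\hat\gamma = -m\beta$, the analogous calculation for $\hat T_{*,m}$ then yields $\hat T_{*,m}(T_{*,m}\omega) = -m\,\omega$ identically as forms. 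Summing over $m$ against $y^m$, and using $y\partial_y\,y^m = m\,y^m$, this collapses to $\widehat{LT}\circ LT = -y\partial_y$; the identity $LT\circ \widehat{LT} = -y\partial_y$ follows by symmetry, and the isomorphism on the $m\neq 0$ components is immediate from the invertibility of $-m\,\mathrm{Id}$.

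For (ii), I would split $T_{*,m} = \tilde T^- - m\,\tilde T^+$ where $\tilde T^-(\omega) := \int_\T p^*\omega$ (degree-decreasing) and $\tilde T^+(\omega) := \int_\T (A\wedge \hat A)\wedge p^*\omega$ (degree-increasing) are $(\tau, z, m)$-independent linear operators. On the degree-$p'$ Jacobi slice,
\begin{equation*}
LT(\omega)^{[p']} \;=\; \tilde T^-\bigl(\omega^{[p'+1]}\bigr) \;-\; \tilde T^+\bigl(y\partial_y\,\omega^{[p'-1]}\bigr),
\end{equation*}
with $\omega^{[p]} := \sum_m \omega_m^{[p]}(\tau)\,y^m$. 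Because $\tilde T^\pm$ are $(\tau, z)$-independent they commute with the $\Gamma$-action and with $L$-translations, so they preserve the Jacobi transformation law and the index $0$ condition. Next, the operator $y\partial_y = -(2\pi\sqrt{-1})^{-1}\partial_z$ raises the weight of an index-$0$ Jacobi form by $1$ (differentiate the modular transformation law in $z$); combined with the differential-form degree shift from fibre integration, this places $LT(\omega)^{[p']}$ in $\mathcal{J}_0^{\overline{k+1}}(\hat Z, \hat H; L, \Gamma)$ at the expected weight. The statement for $\widehat{LT}$ is entirely analogous.

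The main obstacle is the signed bookkeeping in the fibre-integration computation---tracking the anti-commutativity of the $1$-forms $A$ and $\hat A$, coordinating pull-backs and push-forwards through $p$ and $\hat p$, and keeping the sign in $e^{-mA\wedge \hat A}$ consistent with the normalisation $\int_\T A = 1$. It is precisely the factor of $-m$ (and not $\pm 1$ or $+m$) emerging from $\hat T_{*,m}\circ T_{*,m}$ that causes the level-$m$ pieces to assemble cleanly into the Euler operator $-y\partial_y$; any sign slip would destroy the clean form of the composition identity.
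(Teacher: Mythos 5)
Your proposal is correct and follows essentially the same route as the paper: your invariant-form decomposition $\omega=\pi^*\beta+A\wedge\pi^*\gamma$ is the paper's $\omega_m=F_m(\tau)+G_m(\tau)A$, the level-$m$ composite $\hat T_{*,m}\circ T_{*,m}=-m\,\mathrm{Id}$ is computed by the same fibre-integration, and part (ii) rests on the same observation that the Hori operations act componentwise on the $(F_m,G_m)$ data while the factor of $m$ is repackaged as $\partial_z$. One minor remark: you correctly state that $\partial_z$ raises the weight of an index-$0$ Jacobi form by one, whereas the paper asserts the weight is preserved; the discrepancy is harmless because in $\mathcal{J}_0^{\bar k}$ the weight is pinned to the differential-form degree, but your bookkeeping is the more accurate one.
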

\begin{proof} (i) Take a representative 
\be   \sum_{m\in \Z}\omega_m(\tau)y^m,\ee
with $\omega_m(\tau)\in \acal^{\bar k}(Z)_{(d+mH)-cl}^\TT, m \in \ZZ$. Then each $\omega_m(\tau)$ must be of the form
\be  F_m(\tau)+G_m(\tau)A,\ee
with $F_m(\tau), G_m(\tau)$ being holomorphic functions on $\bH$ valued in $\Omega^{\bar k}(X)$ and $\Omega^{\overline{k+1}}(X)$ with isolated singular points respectively; and $A$ is the connection on $Z$ as in Theorem \ref{TD2}.

Applying the level $m$ Hori map, we get
\be 
\begin{split}
&T_{*, m}(F_m(\tau)+G_m(\tau)A)\\
=& \int_\T e^{-mA\wedge \hat A } (F_m(\tau)+G_m(\tau)A)\\
=& (-1)^{k+1}(G_m(\tau)+mF_m(\tau)\hat A).
\end{split}
\ee

Applying the reverse level $m$ Hori map, we get
\be 
\begin{split}
&\hat T_{*, m}( (-1)^{k+1}(G_m(\tau)+mF_m(\tau)\hat A))\\
=& (-1)^{k+1} \int_{\hat \T} e^{mA\wedge \hat A } (G_m(\tau)+mF_m(\tau)\hat A))\\
=& (-1)^{k+1}((-1)^kmG_m(\tau)+(-1)^kmF_m(\tau))\\
=&-m(F_m(\tau)+G_m(\tau)A).
\end{split}
\ee

Therefore we see that 
\be 
\begin{split}
&\widehat{LT}_*\circ LT_*\left(\sum_{m\in \Z}\omega_m(\tau)y^m\right)\\
=& \sum_{m\in \Z} \hat T_{*, m}\circ T_{*, m}(\omega_m(\tau))y^m\\
=&-\sum_{m\in \Z} m\omega_m(\tau)y^m\\
=&-y\frac{\partial}{\partial y}\left(\sum_{m\in \Z}\omega_m(\tau)y^m\right)\\
=&-\frac{\sqrt{-1}}{2\pi }\frac{\partial}{\partial z} \left(\sum_{m\in \Z}\omega_m(\tau)y^m\right).
\end{split}
\ee
So the first equality in (\ref{compo}) is proved. Similarly, one can prove the second equality.

$\, $

(ii) Let $\omega(z, \tau)\in \mathcal{J}_0^{\bar k}(Z, H; L, \Gamma))$ be represented by 
$$\sum_{m\in \Z}\omega_m(\tau)y^m,$$
with $\omega_m(\tau)\in \acal^{\bar k}(Z)_{(d+mH)-cl}^\TT, m\in \Z$ such that the degree $p$ component 
$$\sum_{m\in \Z}\omega_m(\tau)^{[p]}y^m$$
is a Jacobi form of weight $\frac{p+\bar k}{2}$ and index 0 over $L\rtimes \Gamma$. 

Let $\omega_m(\tau)=F_m(\tau)+G_m(\tau)A.$ Then it is not hard to see that 
\be\sum_{m\in \Z} F_m(\tau)^{[p]}y^m \ee
is a Jacobi form of weight $\frac{p+\bar k}{2}$ and index 0 over $L\rtimes \Gamma$ and 
\be \sum_{m\in \Z} G_m(\tau)^{[p-1]}y^m \ee 
is a Jacobi form of weight $\frac{p+\bar k}{2}$ and index 0 over $L\rtimes \Gamma$. 

By the proof of (i), we see that 
\be LT_*\left(\sum_{m\in \Z}\omega_m(\tau)y^m\right)=(-1)^{k+1}\left( \sum_{m\in \Z} G_m(\tau)y^m +\frac{1}{2\pi \sqrt{-1}} \frac{\partial }{\partial z} \left(\sum_{m\in \Z} F_m(\tau)\hat A\cdot y^m \right)\right). \ee

Clearly  $\sum_{m\in \Z} G_m(\tau)^{[p]}y^m$ is a Jacobi form of weight $\frac{p+\overline{k+1}}{2}$ and index 0 over $L\rtimes \Gamma$.

From (i), (ii) in the definition of Jacobi forms, it is not hard to see that if $J(z, \tau)$ is a Jacobi form of weight $s$ and index 0 over $L\rtimes \Gamma$, then $\frac{\partial }{\partial z} J(z, \tau)$ is still a  Jacobi form of weight $s$ and index 0 over $L\rtimes \Gamma$.  Therefore 
$$\frac{\partial }{\partial z} \left(\sum_{m\in \Z} (F_m(\tau)\hat A)^{[p]}\cdot y^m \right)=\frac{\partial }{\partial z} \left(\sum_{m\in \Z} (F_m(\tau))^{[p-1]}\cdot y^m \right)\cdot \hat A$$ 
is a Jacobi form of weight $\frac{p+\overline{k+1}}{2}$ and index 0 over $L\rtimes \Gamma$. 

This shows that 
$$(-1)^{k+1}\left( \sum_{m\in \Z} G_m(\tau)y^m +\frac{1}{2\pi \sqrt{-1}} \frac{\partial }{\partial z} \left(\sum_{m\in \Z} F_m(\tau)\hat A\cdot y^m \right)\right)\in \mathcal{J}_0^{\overline {k+1}}(\hat Z, \hat H; L, \Gamma).$$
Hence 
$$LT\left(\mathcal{J}_0^{\bar k}(Z, H; L, \Gamma))\right)\subseteq \mathcal{J}_0^{\overline {k+1}}(\hat Z, \hat H; L, \Gamma).$$

One can similarly prove the dual side. 

\end{proof}

\begin{remark}\label{repack}
It follows from the results of this section that T-duality according to \cite{BEM04a} can be repackaged as follows. Let $\sum_{m\in \ZZ}\omega_m y^m$, 
with $\omega_m \in \Omega^{\bar k}(Z)_{(d+mH)-cl}, m\in \ZZ$, and let $T_{*, m}$
be the level $m$ Hori map, and $T_y$ be the sum $\oplus_m T_{*, m}$. Similarly, let  $\widehat T_{*, m}$
be the level $m$ Hori map, and $\widehat T_y$ be the sum $\oplus_m \widehat T_{*, m}$. Then it follows formally
from Theorem \ref{main} that $ \widehat{T_y}\circ T_y=-y\frac{\partial}{\partial y}$ and $T_y\circ \widehat{T_y}=-y\frac{\partial}{\partial y}$,
where the Euler operator appears and this agrees with \cite{BEM04a} when $m=1$. This repackaging is particularly useful when
one considers the Chern character of tensor products of gerbe modules.
\end{remark}

\section{Witten gerbe modules} \label{Witten module}
In this section, we construct the {\bf Witten gerbe modules} arising from a gerbe module pair $(E, E')$ inipired by the classical theory of elliptic genera. The graded twisted Chern character of them give examples of elements in  $\mathcal{J}_0^{\bar k}(M, H; L, \Gamma)$, under an anomaly cancellation condition. We will also study effect of the graded Hori map on these Jacobi forms.

\subsection{Even case} 
Let $M$ be an oriented closed smooth manifold of dimension $2r$. Let $H$ be a closed 3-form on $M$ with integral periods. Let $B_\alpha\in \Omega^2(U_\alpha)$ such that $dB_\alpha=H|_{\alpha}$. Let $A_{\alpha\beta}\in \Omega(U_{\alpha\beta})$ such that $B_\alpha-B_\beta=dA_{\alpha\beta}.$ Let $\{(L_{\alpha\beta}, d+A_{\alpha\beta})\}$ be geometric realization of the gerbe (with connection). Then we have
\beq\label{gerbeconn}
(\nabla^L_{\alpha\beta})^2 = F^L_{\alpha\beta} = B_\beta- B_\alpha.
\eeq

Let $E=\{E_{\alpha}\}$
be a collection of (infinite dimensional) separable Hilbert bundles $E_{\alpha}\to U_{\alpha}$ whose structure group is reduced to
$U_{\gI}$, which are unitary operators on the model Hilbert space $\gH$ of the form identity + trace class operator.
Here $\gI$ denotes the Lie algebra of  $U_{\gI}$, the trace class operators on $\gH$.
In addition, assume that on the overlaps $U_{\alpha\beta}$ 
there are isomorphisms
\beq
\phi_{\alpha\beta}: L_{\alpha\beta} \otimes E_\beta \cong E_\alpha,
\eeq
which are consistently defined on
triple overlaps because of the gerbe property. Then $\{E_{\alpha}\}$ is said to be a {\em gerbe module} for the gerbe
$\{L_{\alpha\beta}\}$. A {\em gerbe module connection} $\nabla^E$ is a collection of connections $\{\nabla^E_{\alpha}\}$ of the form $\nabla^E_{\alpha} = d + A_\alpha^E$, where $A_\alpha^E
\in \Omega^1(U_\alpha)\otimes \gI$ whose curvature $F^{E_\alpha}$ on the overlaps $U_{\alpha\beta}$ satisfies
\beq
\phi_{\alpha\beta}^{-1}(F^{E_\alpha}) \phi_{\alpha\beta} =  F^{L_{\alpha\beta}} I  +    F^{E_\beta}.
\eeq
Using equation \eqref{gerbeconn}, this becomes
\beq \label{patch}
\phi_{\alpha\beta}^{-1}( B_\alpha I + F^E_\alpha ) \phi_{\alpha\beta} = B_{\beta} I  +    F^E_\beta.
\eeq
It follows that $\exp(-B)\Tr\left(\exp(-F^E) - I\right)$ is a globally well defined differential form on $M$
of even degree. Notice that $\Tr(I)=\infty$ and that is why we need to consider the subtraction.

Let $E=\{E_{\alpha}\}$ and $E'=\{E'_{\alpha}\}$ 
be {gerbe modules} for the gerbe $\{L_{\alpha\beta}\}$. Then an element of twisted K-theory $K^0(M, H)$
is represented by the pair $(E, E')$, see \cite{BCMMS}. Two such pairs $(E, E')$ and $(G, G')$ are equivalent
if $E\oplus G' \oplus K \cong E' \oplus G \oplus K$ as gerbe modules for some gerbe module $K$ for the gerbe $\{L_{\alpha\beta}\}$.
We can assume without loss of generality that these gerbe modules $E, E'$ are modeled on the same Hilbert space
$\gH$, after a choice of isomorphism if necessary.

Suppose that $\nabla^E, \nabla^{E'}$ are gerbe module connections on the gerbe modules $E, E'$ respectively. Then one can define the {\em twisted Chern character} as
\be \label{twistedChern}
\begin{split}
&Ch_H: K^0(M, \mathcal{G}) \to H^{even}(M, H)\\
&Ch_H(E, E')= \exp(-B)\Tr\left(\exp(-F^E) - \exp(-F^{E'})\right)
\end{split}
\ee
That this is a well defined homomorphism is explained in \cite{BCMMS}. Clearly the degree 0 component of the $Ch_H(E, E')$ is 0. The degree 2 component is 
\be Ch_H^{[2]}(E, E')=\Tr[F^{E}-F^{E'}]=\{\Tr[F^{E_\alpha}-F^{E_\alpha'}]\}. \ee
The degree 4 component is 
\be Ch_H^{[4]}(E, E')=\frac{\Tr[(B+F)^2-(B+F')^2]}{2}=\left\{ \frac{\Tr[(B_\alpha+F^{E_\alpha})^2-(B_\alpha+F^{E'_\alpha})^2]}{2}\right\}. \ee

Recall that for an indeterminate $t$ (c.f. \cite{A67}), 
\be \label{operations} \Lambda_t(E)=\CC
|_M+tE+t^2\wedge^2(E)+\cdots,\ \ \ S_t(E)=\CC |_M+tE+t^2
S^2(E)+\cdots, \ee are the total exterior and
symmetric powers of $E$ respectively. The following relations between these two operations
hold, \be S_t(E)=\frac{1}{\Lambda_{-t}(E)},\ \ \ \
 \Lambda_t(E-F)=\frac{\Lambda_t(E)}{\Lambda_t(F)}.\ee

On $U_\alpha$, define
\be \Theta(E_\alpha)=\bigotimes_{u=1}^\infty
\Lambda_{-q^{u}}(E_{\alpha})\otimes\bigotimes_{u=1}^\infty
\Lambda_{-q^{u}}(\bar E_{\alpha}) .\ee
In the Fourier expansion of $\Theta(E_{\alpha})$,  the coefficient of $q^{n}$ is integral linear combination of terms of the form $$\wedge^{i_1}(E_\alpha)\otimes\wedge^{i_2}(E_\alpha)\otimes \cdots \wedge^{i_k}(E_\alpha)\otimes \wedge^{j_1}(\bar E_\alpha)\otimes \wedge^{j_2}(\bar E_\alpha)\otimes \cdots \wedge^{j_l}(\bar E_\alpha). $$ Pick out the the terms such that $(i_1+i_2+\cdots i_k)-(j_1+j_2+\cdots j_l)=m$ and denote their sum by $W_{m, n}(E)$. Note that for each $m$, there are only finite many nonvanishing $W_{m, n}(E_{\alpha})$. Then we can express
\be  \label{arrange0} \Theta(E_{\alpha})=\sum_{m\in \Z} (\sum_{n=0}^\infty W_{m, n}(E_{\alpha})q^{n}).   \ee 
The isomorphism $\phi_{\alpha\beta}$ induces an isomorphism (which we still denote by $\phi_{\alpha\beta}$ to abuse notation)
$$ \phi_{\alpha\beta}: L_{\alpha\beta}^{\otimes m} \otimes \wedge^{i_1}(E_\alpha)\otimes\wedge^{i_2}(E_\alpha)\otimes \cdots \wedge^{i_k}(E_\alpha)\otimes \wedge^{j_1}(\bar E_\alpha)\otimes \wedge^{j_2}(\bar E_\alpha)\otimes \cdots \wedge^{j_l}(\bar E_\alpha)$$
$$\to \wedge^{i_1}(E_\beta)\otimes\wedge^{i_2}(E_\beta)\otimes \cdots \wedge^{i_k}(E_\beta)\otimes \wedge^{j_1}(\bar E_\beta)\otimes \wedge^{j_2}(\bar E_\beta)\otimes \cdots \wedge^{j_l}(\bar E_\beta).$$
Therefore we can see that $\{W_{m,n}(E_{\alpha})\}$ gives a gerbe module for the gerbe $(mH, mB_\alpha, mA_{\alpha\beta})$ for each $m\in \Z$. Denote them by $W_{m,n}(E)$.

On $U_\alpha$, define
\be \Theta_1(E_{\alpha})=\bigotimes_{u=1}^\infty
\Lambda_{q^u}(E_{\alpha})\otimes \bigotimes_{u=1}^\infty
\Lambda_{q^u}(\bar E_{\alpha}),\ee
\be \Theta_2(E_\alpha)=\bigotimes_{v=1}^\infty
\Lambda_{-q^{v-{1\over2}}}(E_{\alpha})\otimes \bigotimes_{v=1}^\infty
\Lambda_{-q^{v-{1\over2}}}(\bar E_{\alpha}),\ee
\be \Theta_3(E_\alpha)=\bigotimes_{v=1}^\infty
\Lambda_{q^{v-{1\over2}}}(E_{\alpha})\otimes \bigotimes_{v=1}^\infty
\Lambda_{q^{v-{1\over2}}}(\bar E_{\alpha}).\ee
One similarly can express
\be  \Theta_1(E_{\alpha})=\sum_{m\in \Z} (\sum_{n=0}^\infty A_{m, n}(E_{\alpha})q^{n}),\ee
\be  \Theta_2(E_{\alpha})=\sum_{m\in \Z} (\sum_{n=0}^\infty B_{m, n}(E_{\alpha})q^{n/2}), \ee 
 \be  \Theta_3(E_{\alpha})=\sum_{m\in \Z} (\sum_{n=0}^\infty C_{m, n}(E_{\alpha})q^{n/2}),\ee 
where $\{A_{m,n}(E_{\alpha})\}$, $\{B_{m,n}(E_{\alpha})\}$ and $\{C_{m,n}(E_{\alpha})\}$ are gerbe modules for the gerbe $(mH, mB_\alpha, mA_{\alpha\beta})$ for each $m\in \Z$. Denote them by $A_{m,n}(E), B_{m,n}(E)$ and $C_{m,n}(E)$ respectively.

We call the systems 
\be 
\begin{split}
&\Theta(E)=\{\Theta(E_\alpha)\},\ \ \Theta_1(E)=\{\Theta_1(E_\alpha)\},\\
& \Theta_2(E)=\{\Theta_2(E_\alpha)\},\ \ \Theta_3(E)=\{\Theta_3(E_\alpha)\}
\end{split}
\ee  
{\bf Witten gerbe modules} arising from the gerbe module $E$. 

Now consider the quotient 
\be \label{wtheta} \frac{\Theta(E_\alpha)}{\Theta(E'_\alpha)}=\frac{\bigotimes_{u=1}^\infty
\Lambda_{-q^{u}}(E_{\alpha})\otimes\bigotimes_{u=1}^\infty
\Lambda_{-q^{u}}(\bar E_{\alpha}) }{\bigotimes_{u=1}^\infty
\Lambda_{-q^{u}}(E'_{\alpha})\otimes \bigotimes_{u=1}^\infty
\Lambda_{-q^{u}}(\bar E'_{\alpha})}.\ee
As the bottom starts from the trivial bundle $\CC$, we can use the power series expansion of $\frac{1}{1+x}$ to formally expand at $x=0$ for the bottom and arrange as in (\ref{arrange0}) in the following way
\be \frac{\Theta(E_\alpha)}{\Theta(E'_\alpha)}=\frac{\bigotimes_{u=1}^\infty
\Lambda_{-q^{u}}(E_{\alpha})\otimes\bigotimes_{u=1}^\infty
\Lambda_{-q^{u}}(\bar E_{\alpha}) }{\bigotimes_{u=1}^\infty
\Lambda_{-q^{u}}(E'_{\alpha})\otimes \bigotimes_{u=1}^\infty
\Lambda_{-q^{u}}(\bar E'_{\alpha})}=\sum_{m\in \Z} (\sum_{n=0}^\infty W_{m, n}(E_{\alpha}, E'_{\alpha})q^{n}),\ee
where $W_{m, n}(E_{\alpha}, E'_{\alpha})=W_{m, n}(E_{\alpha}, E'_{\alpha})^+\ominus W_{m, n}(E_{\alpha}, E'_{\alpha})^-$ is a virtual bundle. $W_{m, n}(E_{\alpha}, E'_{\alpha})^+$ is a Hilbert bundle of finite sum of tensored exterior powers of $E_\alpha$ and exterior powers of $E'_\alpha$ with each summand having total tensor power $m$. $W_{m, n}(E_{\alpha}, E'_{\alpha})^-$ has similar property. As when $E_\alpha=E'_\alpha$, 
$\frac{\Theta(E_\alpha)}{\Theta(E'_\alpha)}=\CC$, one sees that neither $W_{m, n}(E_{\alpha}, E'_{\alpha})^+$ or $W_{m, n}(E_{\alpha}, E'_{\alpha})^-$ vanishes. 

The isomorphism $\phi_{\alpha\beta}$ deduces an isomorphism (which we still denote by $\phi_{\alpha\beta}$ to abuse notation)
$$ \phi_{\alpha\beta}: L_{\alpha\beta}^{\otimes m} \otimes W_{m, n}(E_{\alpha}, E'_{\alpha})^\pm\to W_{m, n}(E_{\alpha}, E'_{\alpha})^\pm. $$

Let $F^{W_{m, n}(E_{\alpha}, E'_{\alpha})^\pm}$ be the induced curvature on the bundle $W_{m, n}(E_{\alpha}, E'_{\alpha})^\pm$. As $F^{E_\alpha}$ is trace class and $W_{m, n}(E_{\alpha}, E'_{\alpha})^\pm$ is constructed from tensor powers of $E_\alpha$, we see that $F^{W_{m, n}(E_{\alpha}, E'_{\alpha})^\pm}$ is also trace class. 

Then similar in (\ref{twistedChern}), one can see that 
\be \exp{(-mB_\alpha)} \Tr\left(\exp(-W_{m, n}(E_{\alpha}, E'_{\alpha})^+) - \exp(-W_{m, n}(E_{\alpha}, E'_{\alpha})^-)\right) \ee
patch together to be a $d+mH$ closed form even form on $M$. We simply denote this twisted Chern character by
\be \mathrm{Ch}_{mH}(W_{m, n}(E, E')). \ee

Denote the collection $\left\{\frac{\Theta(E_\alpha)}{\Theta(E'_\alpha)}\right\}$ by $\frac{\Theta(E)}{\Theta(E')}$.

Similarly construct
\be \label{wtheta1} \frac{\Theta_1(E_\alpha)}{\Theta_1(E'_\alpha)}=\frac{\bigotimes_{u=1}^\infty
\Lambda_{q^{u}}(E_{\alpha})\otimes\bigotimes_{u=1}^\infty
\Lambda_{q^{u}}(\bar E_{\alpha}) }{\bigotimes_{u=1}^\infty
\Lambda_{q^{u}}(E'_{\alpha})\otimes \bigotimes_{u=1}^\infty
\Lambda_{q^{u}}(\bar E'_{\alpha})}=\sum_{m\in \Z} (\sum_{n=0}^\infty A_{m, n}(E_{\alpha}, E'_{\alpha})q^{n}); \ee

\be\label{wtheta2} \frac{\Theta_2(E_\alpha)}{\Theta_2(E'_\alpha)}=\frac{\bigotimes_{v=1}^\infty
\Lambda_{-q^{v-{1\over2}}}(E_{\alpha})\otimes\bigotimes_{v=1}^\infty
\Lambda_{-q^{v-{1\over2}}}(\bar E_{\alpha}) }{\bigotimes_{v=1}^\infty
\Lambda_{-q^{v-{1\over2}}}(E'_{\alpha})\otimes \bigotimes_{v=1}^\infty
\Lambda_{-q^{v-{1\over2}}}(\bar E'_{\alpha})}=\sum_{m\in \Z} (\sum_{n=0}^\infty B_{m, n}(E_{\alpha}, E'_{\alpha})q^{n/2}); \ee

\be \label{wtheta3}\frac{\Theta_3(E_\alpha)}{\Theta_3(E'_\alpha)}=\frac{\bigotimes_{v=1}^\infty
\Lambda_{q^{v-{1\over2}}}(E_{\alpha})\otimes\bigotimes_{v=1}^\infty
\Lambda_{q^{v-{1\over2}}}(\bar E_{\alpha}) }{\bigotimes_{v=1}^\infty
\Lambda_{q^{v-{1\over2}}}(E'_{\alpha})\otimes \bigotimes_{v=1}^\infty
\Lambda_{q^{v-{1\over2}}}(\bar E'_{\alpha})}=\sum_{m\in \Z} (\sum_{n=0}^\infty C_{m, n}(E_{\alpha}, E'_{\alpha})q^{n/2})\ee
and denote the collection $\left\{\frac{\Theta_i(E_\alpha)}{\Theta_i(E'_\alpha)}\right\}$ by $\frac{\Theta_i(E)}{\Theta_i(E')}, i=1,2 ,3$. One can similarly define the twisted Chern characters for $A_{m, n}(E, E'), B_{m, n}(E, E')$ and $C_{m, n}(E, E')$. 

Define the {\bf graded twisted Chern character} (c.f. \cite{HM19}): 
\be 
\gch\left(\frac{\Theta(E)}{\Theta(E')}\right)=\sum_{m\in \Z}(\sum_{n=0}^\infty\mathrm{Ch}_{mH}(W_{m, n}(E, E'))q^n)y^m\in \bigoplus_{m\in \Z}\Omega^{ev}(M)_{(d+mH)-cl}[q]\cdot y^m,\ee
\be 
\gch\left(\frac{\Theta_1(E)}{\Theta_1(E')}\right)=\sum_{m\in \Z}(\sum_{n=0}^\infty\mathrm{Ch}_{mH}(A_{m, n}(E, E'))q^n)y^m\in \bigoplus_{m\in \Z}\Omega^{ev}(M)_{(d+mH)-cl}[q]\cdot y^m,
\ee
\be \gch\left(\frac{\Theta_2(E)}{\Theta_2(E')}\right)=\sum_{m\in \Z}(\sum_{n=0}^\infty\mathrm{Ch}_{mH}(B_{m, n}(E, E'))q^{n/2})y^m\in \bigoplus_{m\in \Z}\Omega^{ev}(M)_{(d+mH)-cl}[q^{1/2}]\cdot y^m,\ee
\be \gch\left(\frac{\Theta_3(E)}{\Theta_3(E')}\right)=\sum_{m\in \Z}(\sum_{n=0}^\infty\mathrm{Ch}_{mH}(C_{m, n}(E, E'))q^{n/2})y^m\in \bigoplus_{m\in \Z}\Omega^{ev}(M)_{(d+mH)-cl}[q^{1/2}]\cdot y^m.\ee

It is not hard to see that when $\sinh(\pi \sqrt{-1} z)\neq 0$ (and therefore $\sinh(\pi \sqrt{-1}z+\pi \sqrt{-1}B_\alpha)\neq 0$ since $B_\alpha$ is a differential form), 
$$f(x)=\frac{\sinh(\pi \sqrt{-1}z+\pi \sqrt{-1}B_\alpha+\frac{x}{2})}{\sinh(\pi \sqrt{-1}z+\pi \sqrt{-1}B_\alpha)}-1$$ is a holomorphic function for $x\in \CC$ and $f(0)=0$. 
As $F^{E_\alpha}$ is trace class, and the Banach algebra of trace class operators is closed under the {\bf holomorphic functional calculus}, we see that 
\be \frac{\sinh(\pi \sqrt{-1}z+\pi \sqrt{-1}B_\alpha+F^{E_\alpha}/2)}{\sinh(\pi \sqrt{-1}z+\pi \sqrt{-1}B_\alpha)}-1 \ee
is also trace class. Let 
\be \det \left(I+ \left[\frac{\sinh(\pi \sqrt{-1}z+\pi \sqrt{-1}B_\alpha+F^{E_\alpha}/2)}{\sinh(\pi \sqrt{-1}z+\pi \sqrt{-1}B_\alpha)}-1 \right] \right)  \ee
be the {\bf Fredholm determinant} \cite{BS}. 

The form
\be \det\left(\frac{\sinh(\pi \sqrt{-1}z+\pi \sqrt{-1}B_\alpha+F^{E_\alpha}/2)}{\sinh(\pi \sqrt{-1}z+\pi \sqrt{-1}B_\alpha+F^{E'_\alpha}/2)}\right),\ee
then can be understood as
\be \frac{\det \left(I+ \left[\frac{\sinh(\pi \sqrt{-1}z+\pi \sqrt{-1}B_\alpha+F^{E_\alpha}/2)}{\sinh(\pi \sqrt{-1}z+\pi \sqrt{-1}B_\alpha)}-1 \right] \right)}{\det \left(I+ \left[\frac{\sinh(\pi \sqrt{-1}z+\pi \sqrt{-1}B_\alpha+F^{E'_\alpha}/2)}{\sinh(\pi \sqrt{-1}z+\pi \sqrt{-1}B_\alpha)}-1 \right] \right) }. \ee

By $(\ref{patch})$, It is not hard to see that $\left\{\det\left(\frac{\sinh(\pi \sqrt{-1}z+\pi \sqrt{-1}B_\alpha+F^{E_\alpha}/2)}{\sinh(\pi \sqrt{-1}z+\pi \sqrt{-1}B_\alpha+F^{E'_\alpha}/2)}\right)\right\}$ patch to be a global form in 
$\bigoplus_{m\in \Z}\Omega^{ev}(M)_{(d+mH)-cl}\cdot y^m $. 
Denote it by 
\be \label{sinh} \det\left(\frac{\sinh(\pi \sqrt{-1}z+\pi \sqrt{-1}B+F^E/2)}{\sinh(\pi \sqrt{-1}z+\pi \sqrt{-1}B+F^{E'}/2)}\right). \ee

One can similarly construct
\be \det\left(\frac{\cosh(\pi \sqrt{-1}z+\pi \sqrt{-1}B+F^E/2)}{\cosh(\pi \sqrt{-1}z+\pi \sqrt{-1}B+F^{E'}/2)}\right)\in \bigoplus_{m\in \Z}\Omega^{ev}(M)_{(d+mH)-cl}\cdot y^m. \ee

Define
\be 
\begin{split} 
&W(E, E')=\det\left(\frac{\sinh(\pi \sqrt{-1}z+\pi \sqrt{-1}B+F^{E}/2)}{\sinh(\pi \sqrt{-1}z+\pi \sqrt{-1}B+F^{E'}/2)}\right)\gch\left(\frac{\Theta(E)}{\Theta(E')}\right)\\
&\in \bigoplus_{m\in \Z}\Omega^{ev}(M)_{(d+mH)-cl}[q]\cdot y^m, \\
&A(E, E')=\det\left(\frac{\cosh(\pi \sqrt{-1}z+\pi \sqrt{-1}B+F^{E}/2)}{\cosh(\pi \sqrt{-1}z+\pi \sqrt{-1}B+F^{E'}/2)}\right)\gch\left(\frac{\Theta_1(E)}{\Theta_1(E')}\right)\\
&\in \bigoplus_{m\in \Z}\Omega^{ev}(M)_{(d+mH)-cl}[q]\cdot y^m, \\
&B(E, E')=\gch\left(\frac{\Theta_2(E)}{\Theta_2(E')}\right)\in \bigoplus_{m\in \Z}\Omega^{ev}(M)_{(d+mH)-cl}[q^{1/2}]\cdot y^m\\
&C(E, E')=\gch\left(\frac{\Theta_3(E)}{\Theta_3(E')}\right)\in \bigoplus_{m\in \Z}\Omega^{ev}(M)_{(d+mH)-cl}[q^{1/2}]\cdot y^m.
\end{split}
\ee

Passing cohomology, denote the corresponding cohomology classes by
\be 
\begin{split} 
&\mathcal{W}(E, E')\in\bigoplus_{m\in \Z}H^{ev}(M, d+mH)[q]\cdot y^m , \\
&\mathcal{A}(E, E')\in \bigoplus_{m\in \Z}H^{ev}(M, d+mH)[q]\cdot y^m, \\
&\mathcal{B}(E, E')\in \bigoplus_{m\in \Z}H^{ev}(M, d+mH)[q^{1/2}]\cdot y^m\\
&\mathcal{C}(E, E')\in  \bigoplus_{m\in \Z}H^{ev}(M, d+mH)[q^{1/2}]\cdot y^m.
\end{split}
\ee

\begin{theorem} If $Ch_H^{[2]}(E,E')=0$ and $Ch_H^{[4]}(E, E')=0$, then
\be \mathcal{W}(E, E')\in \mathcal{J}_0^{\bar 0}(M, H; \ZZ^2, SL(2, \Z)),\ee
\be \mathcal{A}(E, E')\in \mathcal{J}_0^{\bar 0}(M, H; \ZZ^2, \Gamma_0(2)),\ee
\be \mathcal{B}(E, E')\in \mathcal{J}_0^{\bar 0}(M, H; \ZZ^2, \Gamma^0(2)),\ee
\be \mathcal{C}(E, E')\in \mathcal{J}_0^{\bar 0}(M, H; \ZZ^2, \Gamma_\theta(2)).\ee
\end{theorem}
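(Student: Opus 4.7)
The plan is to express each of $W(E,E'), A(E,E'), B(E,E'), C(E,E')$ as a Fredholm determinant of a ratio of Jacobi theta functions evaluated at the shifted curvatures $z + B + F^E/(2\pi\sqrt{-1})$ and $z + B + F^{E'}/(2\pi\sqrt{-1})$, and then to deduce the Jacobi form property from the classical modular and quasi-periodic identities for theta functions, with the two anomaly conditions cancelling the obstructions. Using the Jacobi triple products (Appendix) together with the combinatorial definitions of $\Theta, \Theta_1, \Theta_2, \Theta_3$, I expect the identity
\[
W(E,E') \;=\; \det\!\frac{\theta_1\!\bigl(z + B + F^E/(2\pi\sqrt{-1}),\,\tau\bigr)/\theta_1(z + B,\,\tau)}{\theta_1\!\bigl(z + B + F^{E'}/(2\pi\sqrt{-1}),\,\tau\bigr)/\theta_1(z + B,\,\tau)},
\]
where the $\sinh$ factor in the definition of $W$ precisely supplies the $\sin(\pi w)$ prefactor of $\theta_1$'s triple product. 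Analogously $A$ becomes a ratio of $\theta_2$'s (with $\cosh$ absorbing the $\cos(\pi w)$ prefactor), while $B$ and $C$ are ratios of the two theta functions without any $\sin/\cos$ prefactor, explaining the absence of an extra determinantal factor in their definitions. These Fredholm determinants are legitimate because $F^E, F^{E'}$ are trace class and the relevant theta ratios are of the form $I + (\text{trace class})$, by the holomorphic functional calculus argument already invoked in \eqref{sinh}.

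With this presentation in place, the index-$0$ quasi-periodicity $J(z+\lambda\tau+\mu, \tau) = J(z, \tau)$ follows from $\theta_i(w+\lambda\tau+\mu, \tau) = (\pm 1)\, e^{-\pi\sqrt{-1}\lambda^2\tau - 2\pi\sqrt{-1}\lambda w}\theta_i(w, \tau)$: the $z$-dependent and $\lambda^2\tau$-dependent phases are universal and cancel between numerator and denominator, and the remaining determinantal phase is $\exp(-\lambda\,\Tr(F^E - F^{E'}))$, which equals $1$ by $Ch_H^{[2]}(E,E')=0$. For the modular law, the transformation $\theta_i(z/(c\tau+d), (a\tau+b)/(c\tau+d)) = \epsilon (c\tau+d)^{1/2} e^{\pi\sqrt{-1}cz^2/(c\tau+d)} \theta_{\sigma(i)}(z, \tau)$ (with $\sigma$ a permutation of theta functions and $\epsilon$ a root of unity) produces, after substitution and taking the determinantal ratio, an exponential factor proportional to
\[
\Tr\!\bigl[(z+B+F^E/(2\pi\sqrt{-1}))^2 - (z+B+F^{E'}/(2\pi\sqrt{-1}))^2\bigr].
\]
The $z$-linear part of this trace is a multiple of $Ch_H^{[2]}(E,E')$, and the $z$-free part is (once the $Ch_H^{[2]}$ vanishing kills the mixed $B$-term) a multiple of $Ch_H^{[4]}(E,E')$, so both vanish by hypothesis. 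The surviving effect of the substitution is to rescale each curvature factor $F^E/(2\pi\sqrt{-1})$ (a degree-$2$ form) by $(c\tau+d)$ inside the arguments of $\theta_{\sigma(i)}$; on a degree-$p$ component exactly $p/2$ such factors appear, producing the weight factor $(c\tau+d)^{p/2}$, which matches the required weight $p/2$ at $\bar k = 0$.

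The identification of the four subgroups comes from the standard $SL(2,\ZZ)$-action by $\sigma$ on the four theta functions: the one appearing in $W$ is fixed by all of $SL(2,\ZZ)$ (up to a root of unity that cancels in the ratio), while the stabilizers of the other three are $\Gamma_0(2), \Gamma^0(2), \Gamma_\theta(2)$ in the paper's labelling, giving the stated groups for $\mathcal{A}, \mathcal{B}, \mathcal{C}$. The main obstacle will be making the ``formal Chern root'' manipulations rigorous at the level of Fredholm determinants on $I + (\text{trace class})$ perturbations. This is handled by the holomorphic functional calculus, which ensures that $\theta_i(z+B+F^E/(2\pi\sqrt{-1}),\tau)/\theta_i(z+B,\tau) - I$ is trace class away from the zeros of $\theta_i(z+B,\tau)$, together with the continuity and multiplicativity of the Fredholm determinant on such perturbations; these two ingredients let the classical theta-function identities transfer from the finite-rank prototype to our infinite-dimensional gerbe-module setting, and the anomaly conditions act precisely by forcing the relevant traces of trace-class operators to vanish.
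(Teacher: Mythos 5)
Your proposal follows essentially the same route as the paper: both hinge on rewriting $W,A,B,C$ as Fredholm determinants of theta-function ratios evaluated at $z+B+F^{E}$ and $z+B+F^{E'}$ (your $\sinh/\cosh$-versus-triple-product bookkeeping is exactly how the paper's identities \eqref{W} and \eqref{ABC} arise, modulo your use of the classical $\theta_1,\dots,\theta_4$ labels in place of the paper's $\theta,\theta_1,\theta_2,\theta_3$), and then on the theta transformation laws, with $Ch_H^{[2]}=0$ killing the quasi-periodicity phase and the $z$-linear modular anomaly, $Ch_H^{[4]}=0$ killing the $z$-free one, and the degree-$p$ component picking up $(c\tau+d)^{p/2}$. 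The only cosmetic difference is that the paper verifies the transformation laws on the generators $S,T$ (and the stated generators of $\Gamma_0(2),\Gamma^0(2),\Gamma_\theta$) rather than for a general $\begin{pmatrix}a&b\\c&d\end{pmatrix}$; your argument is otherwise the same.
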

\begin{proof} 
When $\theta(z, \tau)\neq 0$ (and therefore $\theta(z+B_\alpha, \tau)\neq 0$), consider the function 
\be g(x)=\frac{\theta(z+B_\alpha+x, \tau)}{\theta(z+B_\alpha, \tau)}-1.\ee
It is a holomorphic function for $x\in \CC$ and $g(0)=0$. As $F^{E_\alpha}$ is trace class, by the holomorphic functional calculus, we see that 
$\frac{\theta(z+B_\alpha+F^{E_\alpha}, \tau)}{\theta(z+B_\alpha, \tau)}-1$
is also trace class. 
Let 
\be \det\left(I+\left[ \frac{\theta(z+B_\alpha+F^{E_\alpha}, \tau)}{\theta(z+B_\alpha, \tau)}-1\right] \right) \ee
be the Fredholm determinant. 

As before the form
\be \det\left(\frac{\theta(z+B_\alpha+F^{E_\alpha}, \tau)}{\theta(z+B_\alpha+F^{E'_\alpha}, \tau)}\right)\ee
can be understood as
\be \frac{\det \left(I+ \left[\frac{\theta(z+B_\alpha+F^{E_\alpha}, \tau)}{\theta(z+B_\alpha, \tau)}-1 \right] \right)}{\det \left(I+ \left[\frac{\theta(z+B_\alpha+F^{E'_\alpha}, \tau)}{\theta(z+B_\alpha, \tau)}-1 \right] \right) }. \ee

Similar to (\ref{sinh}), the forms $\left\{\det\left(\frac{\theta(z+B_\alpha+F^{E_\alpha}, \tau)}{\theta(z+B_\alpha+F^{E'_\alpha}, \tau)}\right)\right\}$ patch together to be a global form in $\bigoplus_{m\in \Z}\Omega^{ev}(M)_{(d+mH)-cl}[q]\cdot y^m$.

Like the finite dimensional case for projective elliptic genera \cite{HM19}, the following identity holds, 
\be \label{W}W(E, E')=\det\left(\frac{\theta(z+B+F^{E}, \tau)}{\theta(z+B+F^{E'}, \tau)}\right).\ee

Applying the transformation laws (\ref{theta-tran1}, \ref{theta-tran2}), when $Ch_H^{[2]}(E,E')=0$ and $Ch_H^{[4]}(E, E')=0$, we have for the even degree $p$ component,
\be 
\begin{split}
&\det\left(\frac{\theta(\frac{z}{\tau}+B+F^{E}, -\frac{1}{\tau})}{\theta(\frac{z}{\tau}+B+F^{E'}, -\frac{1}{\tau})}\right)^{[p]}\\
=&\left\{e^{\pi\sqrt{-1}\tau \Tr\left[\left(\frac{z}{\tau}+B+F^{E}\right)^2 -\left(\frac{z}{\tau}+B+F^{E'}\right)^2\right]}\det\left(\frac{{1\over\sqrt{-1}}\left({\tau\over
\sqrt{-1}}\right)^{1/2} \theta(z+\tau(B+F^{E}), \tau)}{{1\over\sqrt{-1}}\left({\tau\over
\sqrt{-1}}\right)^{1/2} \theta(z+\tau(B+F^{E'}), \tau)}\right)\right\}^{[p]}\\
=&\det\left(\frac{\theta(z+\tau(B+F^{E}), \tau)}{\theta(z+\tau(B+F^{E'}), \tau)}\right)^{[p]}\\
=&\tau^{p/2}\det\left(\frac{\theta(z+B+F^{E}, \tau)}{\theta(z+B+F^{E'}, \tau)}\right);
\end{split}
\ee

\be \det\left(\frac{\theta(z+B+F^{E}, \tau+1)}{\theta(z+B+F^{E'}, \tau)+1}\right)^{[p]}=\det\left(\frac{e^{\pi \sqrt{-1}\over 4}\theta(z+B+F^{E}, \tau)}{e^{\pi \sqrt{-1}\over 4}\theta(z+B+F^{E'}, \tau)}\right)^{[p]}=\det\left(\frac{\theta(z+B+F^{E}, \tau)}{\theta(z+B+F^{E'}, \tau)}\right)^{[p]};\ee

\be \det\left(\frac{\theta(z+1+B+F^{E}, \tau)}{\theta(z+1+B+F^{E'}, \tau)}\right)^{[p]}=\det\left(\frac{-\theta(z+B+F^{E}, \tau)}{-\theta(z+B+F^{E'}, \tau)}\right)^{[p]}=\det\left(\frac{\theta(z+B+F^{E}, \tau)}{\theta(z+B+F^{E'}, \tau)}\right)^{[p]};\ee

\be 
\begin{split}
&\det\left(\frac{\theta(z+\tau+B+F^{E}, \tau)}{\theta(z+\tau+B+F^{E'}, \tau)}\right)^{[p]}\\
=&\left\{e^{-2\pi \sqrt{-1}[\Tr(z+B+F^E)-\Tr(z+B+F^{E'}) ]}\det\left(\frac{e^{-\pi \sqrt{-1}\tau}\theta(z+\tau+B+F^{E}, \tau)}{e^{-\pi \sqrt{-1}\tau}\theta(z+\tau+B+F^{E'}, \tau)}\right)\right\}^{[p]}\\
=&\det\left(\frac{\theta(z+B+F^{E}, \tau)}{\theta(z+B+F^{E'}, \tau)}\right)^{[p]}.
\end{split}
\ee

As $SL(2, \Z)$ is generated by $S:\tau\rightarrow-\frac{1}{\tau},\, T:\tau\rightarrow\tau+1$, it is not hard to see from the definition of Jacobi forms that 
$$ \mathcal{W}(E, E')\in \mathcal{J}_0^{\bar 0}(M, H; \ZZ^2, SL(2, \Z)). $$

Similarly, we can show that 
\be 
\begin{split} \label{ABC}
&A(E, E')=\det\left(\frac{\theta_1(z+B+F^{E}, \tau)}{\theta_1(z+B+F^{E'}, \tau)}\right),  \\
&B(E, E')=\det\left(\frac{\theta_2(z+B+F^{E}, \tau)}{\theta_2(z+B+F^{E'}, \tau)}\right), \\
&C(E, E')=\det\left(\frac{\theta_3(z+B+F^{E}, \tau)}{\theta_3(z+B+F^{E'}, \tau)}\right).
\end{split}
\ee
Then by the transformations laws (in the appendix) of the corresponding theta functions, the fact that the generators of $\Gamma_0(2)$ are $T,ST^2ST$, the generators
of $\Gamma^0(2)$ are $STS,T^2STS$  and the generators of
$\Gamma_\theta$ are $S$, $T^2$ and the definition of Jacobi forms, we have 
$$ \mathcal{A}(E, E')\in \mathcal{J}_0^{\bar 0}(M, H; \ZZ^2, \Gamma_0(2)),$$
$$ \mathcal{B}(E, E')\in \mathcal{J}_0^{\bar 0}(M, H; \ZZ^2, \Gamma^0(2)),$$
$$ \mathcal{C}(E, E')\in \mathcal{J}_0^{\bar 0}(M, H; \ZZ^2, \Gamma_\theta(2)).$$

\end{proof}

In the T-dual situation, when $M=Z$ or $\hat Z$, applying Theorem \ref{main} and the proof of the above theorem, we have the following, 
\begin{theorem} 
\be  \label{deri-theta}
\begin{split} 
&(\widehat {LT}_*\circ LT_*)W(E, E')\\
=&-\frac{\sqrt{-1}}{2\pi }\frac{\partial}{\partial z}\det\left(\frac{\theta(z+B+F^{E}, \tau)}{\theta(z+B+F^{E'}, \tau)}\right)\\
=&-\frac{\sqrt{-1}}{2\pi }W(E, E')\Tr\left[ \frac{\theta'(z+B+F^{E}, \tau)}{\theta(z+B+F^{E}, \tau)}-\frac{\theta'(z+B+F^{E'}, \tau)}{\theta(z+B+F^{E'}, \tau)}\right];
\end{split}
\ee

\be \label{deri-theta1}
\begin{split} 
&(\widehat {LT}_*\circ LT_*)A(E, E')\\
=&-\frac{\sqrt{-1}}{2\pi }\frac{\partial}{\partial z}\det\left(\frac{\theta_1(z+B+F^{E}, \tau)}{\theta_1(z+B+F^{E'}, \tau)}\right)\\
=&-\frac{\sqrt{-1}}{2\pi }A(E, E')\Tr\left[ \frac{\theta_1'(z+B+F^{E}, \tau)}{\theta_1(z+B+F^{E}, \tau)}-\frac{\theta_1'(z+B+F^{E'}, \tau)}{\theta_1(z+B+F^{E'}, \tau)}\right];
\end{split}
\ee

\be \label{deri-theta2}
\begin{split} 
&(\widehat {LT}_*\circ LT_*)B(E, E')\\
=&-\frac{\sqrt{-1}}{2\pi }\frac{\partial}{\partial z}\det\left(\frac{\theta_2(z+B+F^{E}, \tau)}{\theta_2(z+B+F^{E'}, \tau)}\right)\\
=&-\frac{\sqrt{-1}}{2\pi }B(E, E')\Tr\left[ \frac{\theta_2'(z+B+F^{E}, \tau)}{\theta_2(z+B+F^{E}, \tau)}-\frac{\theta_2'(z+B+F^{E'}, \tau)}{\theta_2(z+B+F^{E'}, \tau)}\right];
\end{split}
\ee

\be \label{deri-theta3}
\begin{split} 
&(\widehat {LT}_*\circ LT_*)C(E, E')\\
=&-\frac{\sqrt{-1}}{2\pi }\frac{\partial}{\partial z}\det\left(\frac{\theta_3(z+B+F^{E}, \tau)}{\theta_3(z+B+F^{E'}, \tau)}\right)\\
=&-\frac{\sqrt{-1}}{2\pi }C(E, E')\Tr\left[ \frac{\theta_3'(z+B+F^{E}, \tau)}{\theta_3(z+B+F^{E}, \tau)}-\frac{\theta_3'(z+B+F^{E'}, \tau)}{\theta_3(z+B+F^{E'}, \tau)}\right].
\end{split}
\ee

\begin{remark} When $\theta(z, \tau)\neq 0$, the trace part in (\ref{deri-theta}) is well defined. Actually,  
$$\Tr\left[ \frac{\theta'(z+B+F^{E}, \tau)}{\theta(z+B+F^{E}, \tau)}-\frac{\theta'(z+B+F^{E'}, \tau)}{\theta(z+B+F^{E'}, \tau)}\right]$$ 
should be understood as 
$$\Tr\left[ \left(\frac{\theta'(z+B+F^{E}, \tau)}{\theta(z+B+F^{E}, \tau)}-\frac{\theta'(z+B, \tau)}{\theta(z+B, \tau)}\right)-\left(\frac{\theta'(z+B+F^{E'}, \tau)}{\theta(z+B+F^{E'}, \tau)}-\frac{\theta'(z+B, \tau)}{\theta(z+B, \tau)}\right)\right].$$ 
Note that if $\theta(z, \tau)\neq 0$, then $\theta(z+B, \tau)\neq 0$. As the manifold $M$ is finite dimensional, the Taylor expansion of $\frac{1}{\frac{\theta(z+B+F^{E}, \tau)}{\theta(z+B, \tau)}}$ as a polynomial of $F^E$ has only finite many terms and leading term 1.  On the other hand, The function 
$$\theta'(z+B+x, \tau)-\theta'(z+B, \tau)$$
a holomorphic function for $x\in \CC$ and takes 0 when $x=0$ and therefore, $$\theta'(z+B+F^E, \tau)-\theta'(z+B, \tau)$$ is traceable. Therefore 
$$\frac{\theta'(z+B+F^{E}, \tau)}{\theta(z+B+F^{E}, \tau)}-\frac{\theta'(z+B, \tau)}{\theta(z+B, \tau)}=\frac{\frac{\theta'(z+B+F^{E}, \tau)}{\theta(z+B, \tau)}}{\frac{\theta(z+B+F^{E}, \tau)}{\theta(z+B, \tau)}}-\frac{\theta'(z+B, \tau)}{\theta(z+B, \tau)}$$ is traceable. The term about $F^{E'}$ is similarly traceable. Formulas (\ref{deri-theta1}), (\ref{deri-theta2}) and (\ref{deri-theta3}) all have similar meaning. 

\end{remark}

\end{theorem}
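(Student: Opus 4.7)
The plan is to deduce all four identities directly by combining the composition law $\widehat{LT}_*\circ LT_* = -\frac{\sqrt{-1}}{2\pi}\frac{\partial}{\partial z}$ established in Theorem \ref{main}(i) with the closed-form theta-function expressions (\ref{W}) and (\ref{ABC}) for $W(E,E'), A(E,E'), B(E,E'), C(E,E')$ that were extracted during the proof of the preceding theorem. Since the four cases have the same structural shape (only the theta function $\theta_i$ differs), it suffices to work out the argument for $W(E,E')$; the remaining three identities will follow verbatim upon replacing $\theta$ by $\theta_1, \theta_2, \theta_3$ and invoking the analogous explicit formulas in (\ref{ABC}).

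First I apply Theorem \ref{main}(i) to $W(E,E')\in \bigoplus_{m\in\Z}\Omega^{ev}(Z)_{(d+mH)-cl}[q]\cdot y^m$ to get
\be
(\widehat{LT}_*\circ LT_*)W(E,E') = -\frac{\sqrt{-1}}{2\pi}\frac{\partial}{\partial z}\det\left(\frac{\theta(z+B+F^E,\tau)}{\theta(z+B+F^{E'},\tau)}\right),
\ee
using the identity (\ref{W}) on the right-hand side. The remaining claim is the familiar logarithmic-derivative formula $\frac{\partial}{\partial z}\det(M) = \det(M)\cdot\Tr\!\left(M^{-1}\partial_z M\right)$ specialized to a Fredholm determinant, which follows by differentiating the power-series expansion $\log\det(I+X) = \Tr\log(I+X)$ term by term and using that the holomorphic functional calculus commutes with $\partial/\partial z$. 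After differentiating, the $\det$ factor on the right is recognized again as $W(E,E')$ by (\ref{W}), and the logarithmic derivative becomes precisely the displayed trace of $\theta'/\theta$.

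The main obstacle is that the two summands $\Tr[\theta'(z+B+F^E,\tau)/\theta(z+B+F^E,\tau)]$ and $\Tr[\theta'(z+B+F^{E'},\tau)/\theta(z+B+F^{E'},\tau)]$ are individually divergent, since $\theta'/\theta$ evaluated on an identity-plus-trace-class operator is only identity-plus-trace-class, not trace class. To make the formula rigorous I will adopt the renormalization indicated in the remark following the statement: subtract the background $\theta'(z+B,\tau)/\theta(z+B,\tau)\cdot I$ from each of the $E$ and $E'$ contributions. The holomorphic functional calculus shows that both $\theta(z+B+F^E,\tau)/\theta(z+B,\tau) - I$ and $\theta'(z+B+F^E,\tau) - \theta'(z+B,\tau)$ are trace class, so each renormalized summand is a legitimate trace; the divergent background then cancels between the $E$ and $E'$ pieces, yielding a well defined expression that equals the $z$-derivative of the determinant order-by-order in the curvature Taylor expansion. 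Once this is in place, the verbatim repetition with $\theta_1,\theta_2,\theta_3$ gives (\ref{deri-theta1})--(\ref{deri-theta3}).
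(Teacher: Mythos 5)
Your proposal is correct and follows exactly the route the paper intends: the paper derives this theorem by "applying Theorem \ref{main} and the proof of the above theorem," i.e., the composition law $\widehat{LT}_*\circ LT_*=-\frac{\sqrt{-1}}{2\pi}\frac{\partial}{\partial z}$ together with the theta-determinant expressions (\ref{W}) and (\ref{ABC}), followed by the logarithmic derivative of the Fredholm determinant, with the renormalized trace handled precisely as in the remark. You have simply spelled out the same argument in more detail.
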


\subsection{Odd case} In the following, we give examples of Jacobi forms of odd degrees. Let $\{(H, B_\alpha, A_{\alpha\beta})\}$ be a gerbe with connection on $M$ as in Section  3.1.  Let $E=\{E_\alpha\}$ be a $U_{tr}$ gerbe module with module connection $\nabla^E=\{\nabla^{E_\alpha}\}$.  Let $\phi=\{\phi_\alpha: E_\alpha\to E_\alpha\}$ be an automorphism of the gerbe module $E$ that respects the $U_{tr}$ gerbe module structure, that is, $\phi \in U_{tr}(E)$, then $\phi^{-1}\nabla^E\phi$ is another module connection for $E$. As explained in \cite{MS}, 
\be (\phi_\alpha^{-1}F^{E_\alpha}\phi_\alpha+B_\alpha )^k- (F^{E_\alpha}+B_\alpha )^k \ee
are differential forms with values in the trace class endomorphisms of $E_\alpha$ and 
\be \Tr[(\phi_\alpha^{-1}F^{E_\alpha}\phi_\alpha+B_\alpha )^k- (F^{E_\alpha}+B_\alpha )^k] \ee
patch together to be an even degree differential form on $M$. Denote it by $\Tr[(\phi^{-1}F^{E}\phi+B )^k- (F^{E}+B )^k].$

Let $\nabla^E(s)=s\phi^{-1}\nabla^E\phi+(1-s)\nabla^E$ be a path joining $\phi^{-1}\nabla^E\phi$ and $\nabla^E$. Let $A(s)=\partial_s\nabla^E(s)=\phi^{-1}\nabla^E\phi-\nabla^E$, which satisfies 
\be A_\alpha(s)=\psi^{-1}_{\alpha\beta}A_\beta(s) \psi_{\alpha\beta}. \ee

Following \cite{MS}, one defines the odd Chern character form
\be Ch_H(\nabla^E, \phi)=-\exp{(-B)}\int_0^1ds \Tr[A(\phi)\exp(-F^{E}(s))]. \ee
Clearly the degree 1 term is
\be Ch_H^{[1]}(\nabla^E, \phi)=-\Tr[A(\phi)], \ee
and the degree 3 terms is
\be Ch_H^{[3]}(\nabla^E, \phi)=\int_0^1 ds\Tr[A(\phi)(B+F^{E}(s))]. \ee

In view of $(\ref{W})$ and $(\ref{ABC})$ and Chern-Simons transgression,  define
\be
\begin{split}
&W(\nabla^E, \phi)=-\int_0^1ds \Tr\left[A(\phi)\frac{\theta'(z+B+F^{E}(s), \tau)}{\theta(z+B+F^{E}(s), \tau)}\right]\in \bigoplus_{m\in \Z}\Omega^{odd}(M)_{(d+mH)-cl}[q]\cdot y^m,\\
&A(\nabla^E, \phi)=-\int_0^1ds \Tr\left[A(\phi)\frac{\theta_1'(z+B+F^{E}(s), \tau)}{\theta_1(z+B+F^{E}(s), \tau)}\right]\in \bigoplus_{m\in \Z}\Omega^{odd}(M)_{(d+mH)-cl}[q]\cdot y^m,\\
&B(\nabla^E, \phi)=-\int_0^1ds \Tr\left[A(\phi)\frac{\theta_2'(z+B+F^{E}(s), \tau)}{\theta_2(z+B+F^{E}(s), \tau)}\right]\in \bigoplus_{m\in \Z}\Omega^{odd}(M)_{(d+mH)-cl}[q^{1/2}]\cdot y^m,\\
&C(\nabla^E, \phi)=-\int_0^1ds \Tr\left[A(\phi)\frac{\theta_3'(z+B+F^{E}(s), \tau)}{\theta_3(z+B+F^{E}(s), \tau)}\right]\in \bigoplus_{m\in \Z}\Omega^{odd}(M)_{(d+mH)-cl}[q^{1/2}]\cdot y^m.
\end{split}
\ee
 \begin{remark} Note that if $\theta(z, \tau)\neq 0$, then $\theta(z+B, \tau)\neq 0$. As the manifold $M$ is finite dimensional, the Taylor expansion of $\frac{1}{\frac{\theta(z+B+F^{E}(s), \tau)}{\theta(z+B, \tau)}}$ as a polynomial of $F^E(s)$ has only finite many terms and leading term 1. Also $A(\phi)$ is trace class. Hence $A(\phi)\frac{\theta'(z+B+F^{E}(s), \tau)}{\theta(z+B+F^{E}(s), \tau)}$ is also trace class. 
 
 \end{remark}
\begin{theorem} If $Ch_H^{[1]}(\nabla^E, \phi)=0$ and $Ch_H^{[3]}(\nabla^E, \phi)=0$, then
\be \mathcal{W}(\nabla^E, \phi)\in \mathcal{J}_0^{\bar 1}(M, H; \ZZ^2, SL(2, \Z)),\ee
\be \mathcal{A}(\nabla^E, \phi)\in \mathcal{J}_0^{\bar 1}(M, H; (\ZZ)^2, \Gamma_0(2)),\ee
\be \mathcal{B}(\nabla^E, \phi)\in \mathcal{J}_0^{\bar 1}(M, H; (\ZZ)^2, \Gamma^0(2)),\ee
\be \mathcal{C}(\nabla^E, \phi)\in \mathcal{J}_0^{\bar 1}(M, H; (\ZZ)^2, \Gamma_\theta(2)).\ee
\end{theorem}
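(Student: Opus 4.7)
The plan is to parallel the even-case proof: for each generator of the relevant modular subgroup of $SL(2,\Z)$ and of the translation lattice $\Z^{2}$ acting on $z$, verify directly that the odd Chern form $W(\nabla^{E},\phi)$ (and similarly $A,B,C$) transforms by the weight-$(p+\bar 1)/2$, index-$0$ Jacobi automorphy factor on its degree-$p$ component. The essential new ingredient compared to the even case is that these odd forms are Chern--Simons transgressions rather than determinants, so I would work with the logarithmic derivative $\theta'/\theta$ (respectively $\theta_{i}'/\theta_{i}$) in place of $\theta$.

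The transformation laws for $\theta'/\theta$ follow by logarithmic differentiation of (\ref{theta-tran1})--(\ref{theta-tran2}) and the analogous formulas for the $\theta_{i}$. For the generator $S\colon\tau\mapsto -1/\tau$, differentiating
\[
\theta(z/\tau+v,-1/\tau)=c(\tau)\,e^{\pi\sqrt{-1}\tau(z/\tau+v)^{2}}\,\theta(z+\tau v,\tau)
\]
in $v$ yields
\[
\frac{\theta'(z/\tau+v,-1/\tau)}{\theta(z/\tau+v,-1/\tau)}=2\pi\sqrt{-1}\,\tau\bigl(z/\tau+v\bigr)+\tau\,\frac{\theta'(z+\tau v,\tau)}{\theta(z+\tau v,\tau)}.
\]
Substituting $v=B+F^{E}(s)$ in $W(\nabla^{E},\phi)(z/\tau,-1/\tau)$, pairing with $A(\phi)$ and integrating over $s\in[0,1]$, the linear ``drift'' contribution reduces to
\[
-2\pi\sqrt{-1}\,z\,\Tr[A(\phi)]-2\pi\sqrt{-1}\,\tau\!\int_{0}^{1}\!\Tr[A(\phi)(B+F^{E}(s))]\,ds,
\]
which is exactly a combination of $Ch_{H}^{[1]}(\nabla^{E},\phi)=-\Tr[A(\phi)]$ and $Ch_{H}^{[3]}(\nabla^{E},\phi)=\int_{0}^{1}\Tr[A(\phi)(B+F^{E}(s))]\,ds$, both of which vanish by hypothesis.

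What remains is $-\tau\int_{0}^{1}\Tr\bigl[A(\phi)\,\theta'/\theta\bigl(z+\tau(B+F^{E}(s)),\tau\bigr)\bigr]\,ds$. Taylor-expanding $\theta'/\theta$ in its first slot around $z$, the degree-$p$ component selects only the $k=(p-1)/2$ monomial in $(B+F^{E}(s))$, and the rescaling $v\mapsto\tau v$ supplies an extra $\tau^{(p-1)/2}$; combined with the outer $-\tau$ this produces precisely the required weight factor $\tau^{(p+1)/2}=\tau^{(p+\bar 1)/2}$, matching $W(\nabla^{E},\phi)^{[p]}(z,\tau)$. Translations $z\mapsto z+\mu$ with $\mu\in\Z$ leave $\theta'/\theta$ invariant, while $z\mapsto z+\lambda\tau$ shifts it by $-2\pi\sqrt{-1}\lambda$, whose contribution is again a multiple of $\Tr[A(\phi)]=0$; this is exactly the index-$0$ transformation law (no $e^{-2\pi\sqrt{-1}l(\lambda^{2}\tau+2\lambda z)}$ factor appears). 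The cases of $A(\nabla^{E},\phi)$, $B(\nabla^{E},\phi)$, $C(\nabla^{E},\phi)$ are handled identically using the corresponding transformation laws for $\theta_{i}$ and the respective generators recalled in the even-case proof ($T,\,ST^{2}ST$ for $\Gamma_{0}(2)$; $STS,\,T^{2}STS$ for $\Gamma^{0}(2)$; $S,\,T^{2}$ for $\Gamma_{\theta}$). The main obstacle I anticipate is bookkeeping: at each modular generator and each theta variant one must check that the correction terms produced by log-differentiating the automorphy factor assemble \emph{exactly} into the two cohomological expressions $Ch_{H}^{[1]}$ and $Ch_{H}^{[3]}$, so that no residual combination requiring additional vanishing hypotheses is left over.
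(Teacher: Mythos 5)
Your proposal follows essentially the same route as the paper's own proof: differentiate the theta transformation laws to get the transformation behaviour of $\theta'/\theta$, observe that the resulting linear ``drift'' terms pair with $A(\phi)$ to give exactly multiples of $Ch_H^{[1]}(\nabla^E,\phi)$ and $Ch_H^{[3]}(\nabla^E,\phi)$ (which vanish by hypothesis), extract the weight $\tau^{(p+1)/2}$ from the degree-$p$ component of the remaining term, and then reduce to the generators $S,T$ (resp.\ the listed generators of $\Gamma_0(2)$, $\Gamma^0(2)$, $\Gamma_\theta$) for the four cases. The argument and the key cancellations are the same; no substantive difference.
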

\begin{proof} By the transformation laws (\ref{theta-tran1}), we have 
\be \begin{split} &\theta'(v,\tau+1)=e^{\pi
\sqrt{-1}\over 4}\theta'(v,\tau), \\
&\theta'\left(v,-{1}/{\tau}\right)={1\over\sqrt{-1}}\left({\tau\over
\sqrt{-1}}\right)^{1/2} e^{\pi\sqrt{-1}\tau v^2}(2\pi\sqrt{-1}\tau
v\theta\left(\tau v,\tau\right)+\tau\theta'(\tau v, \tau));\\
&\theta_1'(v,\tau+1)=e^{\pi \sqrt{-1}\over
4}\theta_1'(v,\tau),\\
&\theta_1'\left(v,-{1}/{\tau}\right)=\left({\tau\over
\sqrt{-1}}\right)^{1/2} e^{\pi\sqrt{-1}\tau v^2}(2\pi\sqrt{-1}\tau
v\theta_2\left(\tau v,\tau\right)+\tau\theta_2'(\tau v, \tau));\\
&\theta_2'(v,\tau+1)=\theta_3'(v,\tau),\\
&\theta_2'\left(v,-{1}/{\tau}\right)=\left({\tau\over
\sqrt{-1}}\right)^{1/2} e^{\pi\sqrt{-1}\tau v^2}(2\pi\sqrt{-1}\tau
v\theta_1\left(\tau v,\tau\right)+\tau\theta_1'(\tau v, \tau));\\
&\theta_3'(v,\tau+1)=\theta_2'(v,\tau),\\
&\theta_3'\left(v,-{1}/{\tau}\right)=\left({\tau\over
\sqrt{-1}}\right)^{1/2} e^{\pi\sqrt{-1}\tau v^2}(2\pi\sqrt{-1}\tau
v\theta_3\left(\tau v,\tau\right)+\tau\theta_3'(\tau v,
\tau)).\end{split}\ee

By the transformation laws (\ref{theta-tran2}), we have

\begin{equation} 
\begin{split}
&\theta'(z+1,\tau )=-\theta' (z,\tau ),\ \theta' (z+\tau ,\tau )=-%
e^{-\pi \sqrt{-1}(\tau+2z)}\theta' (z, \tau )+2\pi \sqrt{-1}e^{-\pi \sqrt{-1}(\tau+2z)}\theta(z, \tau ),\\
&\theta_{1}'(z+1,\tau )=-\theta _{1}'(z,\tau ),\ \theta _{1}'(z+\tau ,\tau )=%
e^{-\pi \sqrt{-1}(\tau+2z)}\theta _{1}'(z,\tau )-2\pi \sqrt{-1}e^{-\pi \sqrt{-1}(\tau+2z)}\theta _{1}(z,\tau ),\\
&\theta _{2}'(z+1,\tau )=\theta _{2}'(z,\tau ),\ \theta _{2}'(z+\tau ,\tau )=-%
e^{-\pi \sqrt{-1}(\tau+2z)}\theta _{2}'(z,\tau )+2\pi \sqrt{-1}e^{-\pi \sqrt{-1}(\tau+2z)}\theta_2(z, \tau ),\\
&\theta _{3}'(z+1,\tau )=\theta _{3}'(z,\tau ),\ \theta _{3}'(z+\tau ,\tau )=%
e^{-\pi \sqrt{-1}(\tau+2z)}\theta _{3}'(z,\tau )-2\pi \sqrt{-1}e^{-\pi \sqrt{-1}(\tau+2z)}\theta _{3}(z,\tau ).
\end{split}
\end{equation}

Applying these transformation laws and the condition $Ch_H^{[1]}(\nabla^E, \phi)=0$ and $Ch_H^{[3]}(\nabla^E, \phi)=0$, we have for the odd degree $p$ component, 
\be 
\begin{split}
&\left(\int_0^1ds \Tr\left[A(\phi)\frac{\theta'(\frac{z}{\tau}+B+F^{E}(s), -\frac{1}{\tau})}{\theta(\frac{z}{\tau}+B+F^{E}(s), -\frac{1}{\tau})}\right]\right)^{[p]}\\
=&\left(\int_0^1ds \Tr\left[A(\phi)\left(\frac{\tau\theta'(z+\tau(B+F^{E}(s)), \tau)}{\theta(z+\tau(B+F^{E}(s)), \tau)}+2\pi \sqrt{-1}\tau(z+B+F^E(s))\right)\right]\right)^{[p]}\\
=&\tau^{\frac{p+1}{2}}\left(\int_0^1ds \Tr\left[A(\phi)\frac{\theta'(\frac{z}{\tau}+B+F^{E}(s), -\frac{1}{\tau})}{\theta(\frac{z}{\tau}+B+F^{E}(s), -\frac{1}{\tau})}\right]\right)^{[p]};
\end{split}
\ee

\be 
\begin{split}
&\left(\int_0^1ds \Tr\left[A(\phi)\frac{\theta'(z+B+F^{E}(s), \tau+1)}{\theta(z+B+F^{E}(s), \tau+1)}\right]\right)^{[p]}\\
=&\left(\int_0^1ds \Tr\left[A(\phi)\frac{\theta'(z+B+F^{E}(s), {\tau})}{\theta(z+B+F^{E}(s), {\tau})}\right]\right)^{[p]};
\end{split}
\ee

\be 
\begin{split}
&\left(\int_0^1ds \Tr\left[A(\phi)\frac{\theta'(z+1+B+F^{E}(s), \tau)}{\theta(z+1+B+F^{E}(s), \tau)}\right]\right)^{[p]}\\
=&\left(\int_0^1ds \Tr\left[A(\phi)\frac{\theta'(z+\tau+B+F^{E}(s), \tau)}{\theta(z+\tau+B+F^{E}(s), \tau)}\right]\right)^{[p]};
\end{split}
\ee

\be 
\begin{split}
&\left(\int_0^1ds \Tr\left[A(\phi)\frac{\theta'(z+\tau+B+F^{E}(s), \tau)}{\theta(z+\tau+B+F^{E}(s), \tau)}\right]\right)^{[p]}\\
=&\left(\int_0^1ds \Tr\left[A(\phi)\left(\frac{\theta'(z+B+F^{E}(s), {\tau})}{\theta(z+B+F^{E}(s), {\tau})}+2\pi \sqrt{-1}\right)\right]\right)^{[p]}\\
=&\left(\int_0^1ds \Tr\left[A(\phi)\frac{\theta'(z+\tau+B+F^{E}(s), \tau)}{\theta(z+\tau+B+F^{E}(s), \tau)}\right]\right)^{[p]}.
\end{split}
\ee

As $SL(2, \Z)$ is generated by $S$ and $T$, it is not hard to see by the definition of Jacobi forms that 
$$ \mathcal{W}(E, \phi)\in \mathcal{J}_0^{\bar 1}(M, H; \ZZ^2, SL(2, \Z)). $$

By the transformations laws the corresponding theta functions, the fact that the generators of $\Gamma_0(2)$ are $T,ST^2ST$, the generators
of $\Gamma^0(2)$ are $STS,T^2STS$  and the generators of
$\Gamma_\theta$ are $S$, $T^2$ and the definition of Jacobi forms, we have 
$$ \mathcal{A}(E, \phi)\in \mathcal{J}_0^{\bar 1}(M, H; \ZZ^2, \Gamma_0(2)),$$
$$ \mathcal{B}(E, \phi)\in \mathcal{J}_0^{\bar 1}(M, H; \ZZ^2, \Gamma^0(2)),$$
$$ \mathcal{C}(E, \phi)\in \mathcal{J}_0^{\bar 1}(M, H; \ZZ^2, \Gamma_\theta(2)).$$

\end{proof}

\appendix
\section{The Jacobi theta functions}\label{Appendix}

A general reference for this appendix is \cite{Ch85}.

Let $$ SL_2(\mathbb{Z}):= \left\{\left.\left(\begin{array}{cc}
                                      a&b\\
                                      c&d
                                     \end{array}\right)\right|a,b,c,d\in\mathbb{Z},\ ad-bc=1
                                     \right\}
                                     $$
 as usual be the modular group. Let
$$S=\left(\begin{array}{cc}
      0&-1\\
      1&0
\end{array}\right), \ \ \  T=\left(\begin{array}{cc}
      1&1\\
      0&1
\end{array}\right)$$
be the two generators of $ SL_2(\mathbb{Z})$. Their actions on
$\mathbb{H}$ are given by
$$ S:\tau\rightarrow-\frac{1}{\tau}, \ \ \ T:\tau\rightarrow\tau+1.$$

Let
$$ \Gamma_0(2)=\left\{\left.\left(\begin{array}{cc}
a&b\\
c&d
\end{array}\right)\in SL_2(\mathbb{Z})\right|c\equiv0\ \ (\rm mod \ \ 2)\right\},$$

$$ \Gamma^0(2)=\left\{\left.\left(\begin{array}{cc}
a&b\\
c&d
\end{array}\right)\in SL_2(\mathbb{Z})\right|b\equiv0\ \ (\rm mod \ \ 2)\right\}$$

$$ \Gamma_\theta=\left\{\left.\left(\begin{array}{cc}
a&b\\
c&d
\end{array}\right)\in SL_2(\mathbb{Z})\right|\left(\begin{array}{cc}
a&b\\
c&d
\end{array}\right)\equiv\left(\begin{array}{cc}
1&0\\
0&1
\end{array}\right) \mathrm{or} \left(\begin{array}{cc}
0&1\\
1&0
\end{array}\right)\ \ (\rm mod \ \ 2)\right\}$$
be the three modular subgroups of $SL_2(\mathbb{Z})$. It is known
that the generators of $\Gamma_0(2)$ are $T,ST^2ST$, the generators
of $\Gamma^0(2)$ are $STS,T^2STS$  and the generators of
$\Gamma_\theta$ are $S$, $T^2$. (cf. \cite{Ch85}).


The four Jacobi theta-functions (c.f. \cite{Ch85}) defined by
infinite products are

\be \theta(v,\tau)=2q^{1/8}\sin(\pi v)\prod_{j=1}^\infty[(1-q^j)(1-e^{2\pi \sqrt{-1}v}q^j)(1-e^{-2\pi
\sqrt{-1}v}q^j)], \ee
\be \theta_1(v,\tau)=2q^{1/8}\cos(\pi v)\prod_{j=1}^\infty[(1-q^j)(1+e^{2\pi \sqrt{-1}v}q^j)(1+e^{-2\pi
\sqrt{-1}v}q^j)], \ee
 \be \theta_2(v,\tau)=\prod_{j=1}^\infty[(1-q^j)(1-e^{2\pi \sqrt{-1}v}q^{j-1/2})(1-e^{-2\pi
\sqrt{-1}v}q^{j-1/2})], \ee
\be \theta_3(v,\tau)=\prod_{j=1}^\infty[(1-q^j)(1+e^{2\pi
\sqrt{-1}v}q^{j-1/2})(1+e^{-2\pi \sqrt{-1}v}q^{j-1/2})], \ee where
$q=e^{2\pi \sqrt{-1}\tau}, \tau\in \mathbb{H}$.

They are all holomorphic functions for $(v,\tau)\in \mathbb{C \times
H}$, where $\mathbb{C}$ is the complex plane and $\mathbb{H}$ is the
upper half plane.
The theta functions satisfy the the following
transformation laws (cf. \cite{Ch85}), 
\be \label{theta-tran1}
\theta(v,\tau+1)=e^{\pi \sqrt{-1}\over 4}\theta(v,\tau),\ \ \
\theta\left(v,-{1}/{\tau}\right)={1\over\sqrt{-1}}\left({\tau\over
\sqrt{-1}}\right)^{1/2} e^{\pi\sqrt{-1}\tau v^2}\theta\left(\tau
v,\tau\right)\ ;\ee 
\be \theta_1(v,\tau+1)=e^{\pi \sqrt{-1}\over
4}\theta_1(v,\tau),\ \ \
\theta_1\left(v,-{1}/{\tau}\right)=\left({\tau\over
\sqrt{-1}}\right)^{1/2} e^{\pi\sqrt{-1}\tau v^2}\theta_2(\tau
v,\tau)\ ;\ee 
\be\theta_2(v,\tau+1)=\theta_3(v,\tau),\ \ \
\theta_2\left(v,-{1}/{\tau}\right)=\left({\tau\over
\sqrt{-1}}\right)^{1/2} e^{\pi\sqrt{-1}\tau v^2}\theta_1(\tau
v,\tau)\ ;\ee 
\be\theta_3(v,\tau+1)=\theta_2(v,\tau),\ \ \
\theta_3\left(v,-{1}/{\tau}\right)=\left({\tau\over
\sqrt{-1}}\right)^{1/2} e^{\pi\sqrt{-1}\tau v^2}\theta_3(\tau
v,\tau)\ .\ee

\begin{equation} \label{theta-tran2}
\theta (v+1,\tau )=-\theta (v,\tau ),\ \theta (v+\tau ,\tau )=-%
e^{-\pi \sqrt{-1}(\tau+2v)}\theta (v, \tau ),
\end{equation}%
\begin{equation}
\theta _{1}(v+1,\tau )=-\theta _{1}(v,\tau ),\ \theta _{1}(v+\tau ,\tau )=%
e^{-\pi \sqrt{-1}(\tau+2v)}\theta _{1}(v,\tau ),
\end{equation}%
\begin{equation}
\theta _{2}(v+1,\tau )=\theta _{2}(v,\tau ),\ \theta _{2}(v+\tau ,\tau )=-%
e^{-\pi \sqrt{-1}(\tau+2v)}\theta _{2}(v,\tau ),
\end{equation}~
\begin{equation}
\theta _{3}(v+1,\tau )=\theta _{3}(v,\tau ),\ \theta _{3}(v+\tau ,\tau )=%
e^{-\pi \sqrt{-1}(\tau+2v)}\theta _{3}(v,\tau ).
\end{equation}


\begin{thebibliography}{10}


\bibitem {A67} M.~F. Atiyah, $K\ theory$, Benjamin, New York, 1967.


\bibitem{BCMMS}
P.~Bouwknegt, A.~Carey, V.~Mathai, M.~Murray and D.~Stevenson,
Twisted K-theory and K-theory of bundle gerbes,
Comm. Math. Phys. {\bf 228} (2002) 17-49, MR1911247
[{\tt arMiv:hep-th/0106194}].

\bibitem{BEM04a}
P.~Bouwknegt, J.~Evslin and V.~Mathai,
 T-duality: Topology Change from H-flux,
Comm. Math. Phys. {\bf 249} (2004) 383-415, MR2080959
[{\tt arMiv:hep-th/0306062}].

\bibitem{BEM04b}
\bysame,
On the Topology and Flux of T-Dual Manifolds,
Phys. Rev. Lett. {\bf 92} (2004) 181601,  MR2116165
[{\tt arMiv:hep-th/0312052}].


\bibitem{Buscher}
T. Buscher. A symmetry of the string background field equations. 
Phys. Lett. B 194 (1987), no. 1, 59-62. 

\bibitem {Ch85} K. Chandrasekharan, {\it Elliptic Functions}. Springer-Verlag, 1985.

\bibitem {CH} Q. Chen and F. Han, Elliptic Genera, Transgression and Loop Space Chern-Simons Forms, {\it Comm. Anal. Geom.}, Vol 17, No.1, Jan 2009, 73--106.


\bibitem{EZ} Eichler, M., Zagier, D.: The Theory of Jacobi Forms, \emph{Progress in
  Mathematics}, vol.~55.
\newblock Birkh\"auser Boston Inc., Boston, MA (1985)


\bibitem{HM15}
F.~Han and V.~Mathai, Exotic twisted equivariant cohomology of loop spaces, twisted Bismut-Chern character and T-duality, 
Comm. Math. Phys., {\bf 337}, no. 1, (2015) 127--150. MR3324158 [{\tt arMiv:1405.1320}].

\bibitem{HM18}
F.~Han and V.~Mathai, 
T-duality in an H-flux: exchange of momentum and winding, Communications in Mathematical Physics, 363, no. 1 (2018) 333-350.
Comm. Math. Phys., {\bf 363}, no. 1, (2018) 333--350.  [{\tt arMiv:1710.07274}].


\bibitem{HM19} F. Han and V. Mathai, Projective Elliptic Genera and Elliptic Pseudodifferential Genera, {\em Advances in Mathematics}, 358 (2019), 106860, 25pp.



\bibitem{HBJ} F. Hirzebruch, T. Berger and R. Jung, \textit{Manifolds and
Modular Forms.} Aspects of Mathematics, vol. E20, Vieweg, Braunschweig 1992.


\bibitem{Hop} M. Hopkins, {\em Algebraic Topology and Modular Forms}, Plenary talk, ICM, Beijing, 2002.


\bibitem{LS88} P.~S. Landweber and Robert~E. Stong.
 Circle actions on {S}pin manifolds and characteristic numbers.
 {\em Topology}, 27(2):145--161, 1988.
 
 \bibitem{LM15}  
A.~Linshaw and V.~Mathai, 
Twisted chiral de Rham complex, generalized geometry, and T-duality.
Commun.\ Math.\ Phys.\ {\bf 339} no. 2 (2015) 663-697.  [arXiv:1412.0166]


\bibitem {Liu95cmp} K. Liu, Modular invariance and characteristic
numbers. {\it Commun. Math. Phys}. 174 (1995), 29-42.



\bibitem{Liu95} K. Liu, On modular invariance and rigidity theorems, {\it J. Differential Geom.} 41 (1995) 343--396.


\bibitem{MS}
V.~Mathai and D.~Stevenson,
Chern character in twisted K-theory: equivariant and holomorphic cases,
Comm. Math. Phys. {\bf 236} (2003) 161--186. 		

\bibitem{O87} S. Ochanine,  Sur les genres multiplicatifs d\'efinis par des int\'egrales
  elliptiques,  {\em Topology}, 26(2):143--151, 1987.

\bibitem{BS} B. Simon, Trace ideals and their applications. 
London Mathematical Society Lecture Note Series, 35. Cambridge University Press, Cambridge-New York, 1979.


\bibitem{Wi86} Edward Witten.
Noncommutative geometry and string field theory. Nuclear Phys. B 268 (1986), no. 2, 253--294.


\bibitem{W86} Edward Witten.
 The index of the {D}irac operator in loop space.
 In {\em Elliptic Curves and Modular Forms in Algebraic Topology
  ({P}rinceton, {NJ}, 1986)}, volume 1326 of {\em Lecture Notes in Math.},
  pages 161--181. Springer, Berlin, 1988.


\bibitem{W87} E. Witten, Elliptic genera and quantum field theory, {\em Comm. Math. Phys.} 109 (1987), no. 4, 525--536.

\bibitem{W4} E. Witten, Index of the Dirac operators, {\it Quantum Fields and Strings: A Course for
Mathematicians}, Vol. 1, 2 (Princeton , NJ, 1996/1997), AMS (1999)
475-511.



\end{thebibliography}
\end{document}